\newcommand{\G}{\mathcal{G}}
\newcommand{\E}{\mathcal{E}}
\newcommand{\Pt}{\pi}
\newcommand{\PT}{\Pi}
\newcommand{\lp}{\langle}
\newcommand{\rp}{\rangle}
\newcommand{\diag}{\operatorname{diag}}
\newcommand{\IF}{\operatorname{IF}}
\newcommand{\infCORR}[1]{\Omega^{#1}}
\newcommand{\given}{\cdot}
\newcommand{\vc}[2]{(#1)(#2)}
\newcommand{\w}{\omega}
\theoremstyle{plain}
\newtheorem{theorem}{Theorem}[section]
\newtheorem{lemma}[theorem]{Lemma}
\newtheorem{proposition}[theorem]{Proposition}
\newtheorem{corollary}[theorem]{Corollary}
\theoremstyle{definition}
\newtheorem{example}{Example}[section]
\theoremstyle{remark}
\title{Path Weights in Concentration Graphs\\[1ex]}
\author{Alberto Roverato\\\texttt{alberto.roverato@unipd.it}\\ Dept. of Statistical Sciences \\ Universit\`a di Padova \\ Padova, Italy
\and Robert Castelo\\ \texttt{robert.castelo@upf.edu}\\ Dept. of Experimental and Health Sciences \\ Universitat Pompeu Fabra \\ Barcelona, Spain\\}
\date{April 28, 2020}
\begin{document}

\maketitle
\begin{abstract}
A graphical model provides a compact and efficient representation of the association structure of a multivariate distribution by means of a graph. Relevant features of the distribution are represented by vertices, edges and other higher-order graphical structures, such as cliques or paths. Typically, paths play a central role in these models because they determine the independence relationships among variables. However, while a theory of path coefficients is available in models for directed graphs, little has been investigated about the strength of the association represented by a path in an undirected graph.  Essentially, it has been shown that the covariance between two variables can be decomposed into a sum of weights associated with each of the paths connecting the two variables in the corresponding concentration graph. In this context, we consider concentration graph models and provide an extensive analysis of the properties of path weights and their interpretation. More specifically, we give an interpretation of covariance weights through their factorisation into a partial covariance and an inflation factor. We then extend the covariance decomposition over the paths of an undirected graph to other measures of association, such as the marginal correlation coefficient and a quantity that we call the inflated correlation. The usefulness of these findings is illustrated with an application to the analysis of dietary intake networks.
\end{abstract}

\noindent\emph{Keywords}:
Covariance decomposition;
Concentration graph;
Inflation factor;
Partial correlation;
Undirected path;
Undirected graphical model.

\section{Introduction}
Statistical models associated with a graph, called graphical models, are of significant interest in many modern applications and have become a popular tool for representing network structures in applied contexts such as genetic and brain network analysis; see \citep{maathuis2019handbook} for a recent review of the state of art of graphical models.

In graphical models, paths joining vertices of the graph are the main tools used in the definition of separation criteria and therefore, in the specification of the association structure of the variables. In models for continuous acyclic directed graphs it is also possible to compute path coefficients. Specifically, the theory of path analysis, originated from the seminal work of \citet{wright1921correlation}, provides a way of quantifying the relative importance of causal relationships represented by directed paths. On the other hand, in models for undirected graphs little has been investigated on the strength of the association encoded by paths. A prominent exception is the work by \citet{jones2005covariance}, which introduced a covariance decomposition in terms of additive weights associated with undirected paths in concentration graph models. A concentration graph \citep{cox1996multivariate,whittaker1990graphical,lauritzen1996graphical} is a labelled undirected graph that provides a representation of the sparsity pattern of an inverse covariance matrix. In such a graph, labelled vertices are associated with random variables and missing edges with zero entries in the inverse covariance matrix. The interpretation of the path weights of \citet{jones2005covariance}, however, remained unexplored until \citet{roverato2017networked} provided one for the basic case of single-edge paths. Some preliminary results on the paths between vertices in a tree were also given by \citet{pmlr-v72-roverato18a}.

In this article, we consider concentration graph models and provide an extensive analysis of the properties of the weight of an arbitrary path and of its interpretation. We first consider the original weights introduced by \citet{jones2005covariance} obtained from the decomposition of the covariance between two variables, and then move on to a more general setting that also includes the weights obtained from the decomposition of certain normalised measures of association. More specifically, we consider the decomposition of the correlation coefficient between two variables and of a novel, to the best of our knowledge, normalized measure of linear association that we call the inflated correlation coefficient. We find that, as far as path weights are concerned, the latter quantity should be preferred to the traditional correlation coefficients, because the corresponding weights admit a clearer interpretation. Furthermore, the weights relative to inflated correlations satisfy a set of properties which are consistent with the properties of weights computed from covariance coefficients. Finally, we illustrate possible uses of the theory of path weights in  applied contexts by the analysis of food intake patterns based on two, sex-specific, dietary intake networks from \citet{iqbal2016}.

\section{Background and notation}\label{SEC:notation}
\subsection{Concentration graphs}
An undirected graph with vertex set $V$ is a pair $\G=(V, \E)$ where $\E$ is a set of edges, which are unordered pairs of vertices; formally $\E\subseteq V\times V$. The graphs we consider have no self-loops, that is $\{v, v\}\not\in \E$ for any $v\in V$. We say that a graph $\G^{\prime}=(V^{\prime}, \E^{\prime})$ is a subgraph of $\G$ if $V^{\prime}\subseteq V$ and $\E^{\prime}\subseteq \E$. The subgraph of $\G$ induced by $A\subseteq V$ is the undirected graph $\G_{A}$ with vertex set $A$ and edges $\E_{A}=\{\{u, v\}\in \E: u,v\in A\}$. A path of length $k\geq 2$ between $x$ and $y$ in  $\G$ is a sequence $\Pt=\lp x=v_{1},\ldots,v_{k}=y\rp$ of  distinct vertices such that $\{v_{i},v_{i+1}\}\in \E$ for every $i=1,\ldots,k-1$. We denote by $V(\Pt)\subseteq V$ and $\E(\Pt)\subseteq \E$ the set of vertices and edges of the path $\pi$, respectively. We write $\Pt_{xy}$ when we want to make more explicit which are the endpoints of the path and, furthermore, when clear from the context, we will write $P\equiv V(\Pt)$ to improve the readability of sub- and super-scripts. For a pair of vertices $x,y\in A$, we denote by $\PT^{V}_{xy}\equiv\PT_{xy}$ the collection of all paths between $x$ and $y$ in $\G$ and by $\PT^{A}_{xy}$ the set of paths joining the same pair of vertices in $\G_{A}$. It is straightforward to see that $\PT^{A}_{xy}\subseteq \PT_{xy}$ and, more specifically,  $\Pt\in \PT^{A}_{xy}$ if and only if $\Pt$ is such that $\Pt\in\PT_{xy}$ and $V(\Pt)\subseteq A$.

Let $X\equiv X_{V}$ be a random vector indexed by a finite set $V=\{1,\ldots,p\}$ so that for $A\subseteq V$, $X_{A}$ is the subvector of $X$ indexed by $A$. The random vector $X_{V}$ has probability distribution $\mathbb{P}_{V}$ and  covariance matrix $\Sigma=\{\sigma_{uv}\}_{u,v\in V}$.  The concentration  matrix $K=\{\kappa_{uv}\}_{u,v\in V}$ of $X_{V}$ is the inverse of its covariance matrix, that is $K=\Sigma^{-1}$. We say that $K$ is adapted to a graph $\G=(V, \E)$ if for every $\kappa_{uv}\neq 0$, with $u\neq v$, it holds that $\{u,v\}\in \E$ and, accordingly, we call $\G$ a concentration graph of $X_{V}$. The concentration graph model \citep{cox1996multivariate}  with graph $\G=(V, \E)$ is the family of multivariate normal distributions whose concentration matrices are adapted to $\G$. The latter model has also been called a covariance selection model \citep{dempster1972covariance} and a graphical Gaussian model \citep{whittaker1990graphical}; we refer the reader to \citet{lauritzen1996graphical} for details and discussion.

For $A,B\subseteq V$ with $A\cap B=\emptyset$, the partial covariance matrix $\Sigma_{AA\given B}=\Sigma_{AA}-\Sigma_{AB}\Sigma_{BB}^{-1}\Sigma_{BA}$ is the covariance matrix of $X_{A}|X_{B}$, that is, the residual vector deriving from the linear least square predictor of $X_{A}$ on $X_{B}$ \citep[see][p.~134]{whittaker1990graphical}. We denote by $\sigma_{uv\given B}$, for $u,v\in A$, the entries of $\Sigma_{AA\given B}$ and recall that, in the Gaussian case,  $\Sigma_{AA\given B}$ coincides with the covariance matrix of the conditional distribution of $X_{A}$ given $X_{B}$. Note that we use the convention that $\Sigma_{AA}^{-1}=(\Sigma_{AA})^{-1}$ and, similarly, $\Sigma_{AA\given B}^{-1}=(\Sigma_{AA\given B})^{-1}$. Furthermore, if $\bar{A}=V\setminus A$ is the complement of a subset $A$ with respect to $V$, then it follows from the rule for the inversion of a partitioned matrix that $\Sigma_{AA\given \bar{A}}=K_{AA}^{-1}$ and, similarly, $\Sigma_{AA}^{-1}=K_{AA\given \bar{A}}$. A useful consequence of the latter equalities is that if $K$ is adapted to $\G$, then the concentration matrix of $X_{A}|X_{\bar{A}}$, i.e., $K_{AA}$, is adapted to $\G_{A}$, and therefore, $\G_{A}$ is a concentration graph of $X_{A}|X_{\bar{A}}$.

\subsection{Covariance decomposition over $\G$}
In the analysis of graphical Gaussian models, \citet{jones2005covariance}  showed that the covariance between two variables can be computed as the sum of weights associated with the paths joining the two variables.
\begin{theorem}[\citet{jones2005covariance}]\label{THM:jones.west}
Let $K=\Sigma^{-1}$ be the concentration matrix of  $X_{V}$. If $K$ is adapted to the graph $\G=(V, \E)$ then for every $x,y\in V$  it holds that
\begin{align}\label{EQN:in.thm.jones.west}
\sigma_{xy}
    =\sum_{\Pt\in \PT_{xy}} \w(\Pt, \Sigma)
\end{align}
where
\begin{align}\label{EQN:jones.and.west.path.weight1}
     \w(\Pt, \Sigma)
     =  (-1)^{|P|+1}\;\frac{|K_{\bar{P} \bar{P}}|}{|K|}\;\prod_{\{u,v\}\in \E(\Pt)} \kappa_{uv}.
\end{align}
\end{theorem}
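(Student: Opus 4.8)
The plan is to reduce the identity to the Leibniz (permutation) expansion of a determinant and to exploit the adaptedness of $K$ to $\G$. First I would write $\sigma_{xy}$ as an entry of the inverse $K^{-1}$ and invoke the adjugate (cofactor) formula, so that for $x\neq y$
\[
\sigma_{xy}=(K^{-1})_{xy}=\frac{(-1)^{x+y}\,|K_{\hat{y}\hat{x}}|}{|K|},
\]
where $K_{\hat{y}\hat{x}}$ is the submatrix of $K$ obtained by deleting row $y$ and column $x$. Grouping the Leibniz expansion of $|K|$ according to the image of $y$ identifies the numerator with the $(y,x)$ cofactor,
\[
(-1)^{x+y}|K_{\hat{y}\hat{x}}|=\sum_{\psi:\,\psi(y)=x}\operatorname{sgn}(\psi)\prod_{u\neq y}\kappa_{u,\psi(u)},
\]
the sum ranging over all permutations $\psi$ of $V$ that send $y$ to $x$.

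Next I would analyse the cycle structure of each such $\psi$. Since $\psi(y)=x$, the cycle of $\psi$ containing $y$ traces a closed sequence $y\to x\to\psi(x)\to\cdots\to y$; reading off its steps other than the one leaving $y$ produces a sequence $x=u_{1},u_{2},\ldots,u_{k}=y$ of distinct vertices, distinct because a cycle visits each element once. Here is where the hypothesis enters: because $K$ is adapted to $\G$, every factor $\kappa_{u,\psi(u)}$ with $u\neq\psi(u)$ vanishes unless $\{u,\psi(u)\}\in\E$. A permutation therefore contributes a nonzero term only when each consecutive pair $\{u_{i},u_{i+1}\}$ is an edge, i.e. only when the cycle through $y$ is exactly a path $\Pt\in\PT_{xy}$ with $P=V(\Pt)$; the contribution of that cycle to the product is precisely $\prod_{\{u,v\}\in\E(\Pt)}\kappa_{uv}$.

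I would then factor the restricted sum as a choice of the path $\Pt$ (equivalently, of the cycle on $P$ through $y$) followed by an arbitrary permutation $\psi'$ of the remaining vertices $\bar{P}=V\setminus P$. The sign splits multiplicatively over cycles: the single cycle on $P$ has length $|P|$ and hence sign $(-1)^{|P|-1}=(-1)^{|P|+1}$, while the residual factor $\sum_{\psi'\in S_{\bar{P}}}\operatorname{sgn}(\psi')\prod_{u\in\bar{P}}\kappa_{u,\psi'(u)}$ is, again by the Leibniz formula, exactly $|K_{\bar{P}\bar{P}}|$. Collecting the pieces gives
\[
(-1)^{x+y}|K_{\hat{y}\hat{x}}|=\sum_{\Pt\in\PT_{xy}}(-1)^{|P|+1}\,|K_{\bar{P}\bar{P}}|\prod_{\{u,v\}\in\E(\Pt)}\kappa_{uv},
\]
and dividing by $|K|$ yields \eqref{EQN:in.thm.jones.west} with the weight \eqref{EQN:jones.and.west.path.weight1}.

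The main obstacle is the bookkeeping in this last factorisation. One must verify that the correspondence between surviving permutations and pairs consisting of a path on $P$ and a permutation of $\bar{P}$ is a genuine bijection: in particular that the cycle through $y$ is \emph{forced} to be a simple path (distinctness of its vertices being automatic from the cycle structure, and edge membership from adaptedness), that distinct paths sharing the same vertex set $P$ correspond to distinct cycles, and that the complementary sum reassembles $|K_{\bar{P}\bar{P}}|$ with no spurious interaction with the path factor. The remaining care is purely in the sign, namely checking that the cycle-length parity $(-1)^{|P|-1}$ indeed matches the claimed $(-1)^{|P|+1}$. The degenerate case $x=y$ falls outside the path convention $k\geq 2$ and reduces directly to $\sigma_{xx}=|K_{\hat{x}\hat{x}}|/|K|$.
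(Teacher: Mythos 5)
The paper itself offers no proof of this statement: it is quoted from \citet{jones2005covariance}, so there is no in-paper argument to compare against. Your proposal is nevertheless a correct and essentially complete proof, and it follows the same route as the original reference: write $\sigma_{xy}$ via the adjugate formula, identify the cofactor with the Leibniz sum over permutations $\psi$ with $\psi(y)=x$, and split each such permutation into the cycle through $y$ and an arbitrary permutation of the complementary vertices. Adaptedness of $K$ to $\G$ kills every term whose cycle through $y$ does not trace a path in $\PT_{xy}$; the surviving cycle on $P=V(\Pt)$ has length $|P|$ and hence sign $(-1)^{|P|-1}=(-1)^{|P|+1}$ and contributes exactly the edge product $\prod_{\{u,v\}\in\E(\Pt)}\kappa_{uv}$ (the step $y\to x$ being excluded from the product, so no edge between the endpoints is required); and the residual signed sum over permutations of $\bar{P}$ is precisely $|K_{\bar{P}\bar{P}}|$. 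The bookkeeping you flag as the main obstacle is indeed the only delicate point, and your parenthetical justifications dispose of it: distinctness of the cycle's vertices is automatic, the correspondence between cycles through $y$ sending $y$ to $x$ on a fixed vertex set $P$ and orderings $x=u_{1},\ldots,u_{k}=y$ of $P$ is bijective, and both the sign and the product factor cleanly over the two blocks. One small remark on the statement rather than on your argument: as written the theorem quantifies over all $x,y\in V$, but since the paper's paths have length $k\geq 2$ the sum is empty when $x=y$; the identity is intended for $x\neq y$ (compare Corollary~\ref{THM:jones.west.special.case1}, which imposes $x\neq y$ explicitly), and your closing observation that $\sigma_{xx}=|K_{\hat{x}\hat{x}}|/|K|$ handles the degenerate case correctly.
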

The quantity $\w(\Pt, \Sigma)$ in (\ref{EQN:jones.and.west.path.weight1}) represents the contribution of the path $\Pt$ to the covariance $\sigma_{xy}$ and, for this reason, we call it the covariance weight of $\Pt$ relative to $X_{V}$. More generally, we will refer to (\ref{EQN:in.thm.jones.west}) as the covariance decomposition over $\G$.  There is no clear interpretation associated with the value taken by a covariance weight, with the exception of paths consisting of a single edge  \citep{roverato2017networked}, and the forthcoming sections will address this issue. We also recall that another interesting decomposition of the covariance in Gaussian models, in terms of walk-weights, can be found in \citet{malioutov2006walk} and references therein. Unlike paths, walks can cross an edge multiple times.

\section{Inflation factors}\label{SEC:inflation.factors}
Linear regression diagnostics use a quantity called  the variance inflation factor to quantify the effect of multicollinearity, and this section deals with a generalized version of the inflation factor that arises naturally in the theory of path weights developed in this paper.

The variance inflation factor of $X_{v}$ on $X_{V\setminus \{v\}}$ is defined as $\IF_{v}=1/(1-\rho^{2}_{\vc{v}{V\setminus \{v\}}})$  where $\rho_{\vc{v}{V\setminus \{v\}}}$ is the multiple correlation of $X_{v}$ on $X_{V\setminus \{v\}}$. $\IF_{v}$ equals 1 when $X_{v}$ and $X_{V\setminus \{v\}}$ are uncorrelated so that $\rho_{\vc{v}{V\setminus \{v\}}}=0$; otherwise $\IF_{v}>1$ and its value increases as  $\rho_{\vc{v}{V\setminus \{v\}}}$ increases \citep[see][]{belsley2005regression,chatterjee2015regression}.

\citet{fox1992generalized} considered the case where one is interested in sets of regressors rather than individual regressors and introduced a generalized version of the variance inflation factor; specifically, for a pair of subsets $A,B\subseteq V$, with $A\cap B=\emptyset$ this is given by,
\begin{align}\label{EQN:PWIF}
  \IF_{A}^{B}=\frac{|\Sigma_{AA}||\Sigma_{BB}|}{|\Sigma_{A\cup B A\cup B}|},
\end{align}
so that $\IF_{A}^{B}=\IF_{v}$ when $A=\{v\}$ and $B=V\setminus \{v\}$. We will refer to $\IF_{A}^{B}$ as the inflation factor of $A$ on $B$, and to simplify the notation we will write $\IF_{A}$ when $B=V\setminus A$.
It is also worth remarking that the covariance matrices we consider are assumed to be positive definite so that $\IF_{A}^{B}\geq 1$ and we set $\IF_{A}^{B}=1$ whenever either $A=\emptyset$ or $B=\emptyset$. More generally, throughout this paper we use the convention that the determinant of a submatrix whose rows and columns are indexed by the empty set is equal to 1.
The following lemma provides some ways to compute $\IF_{A}^{B}$ alternative to (\ref{EQN:PWIF}).
\begin{lemma}\label{THM:computation.of.IF}
Let $\Sigma$ the covariance matrix of $X_{V}$, then for any pair of  subsets $A,B\subseteq V$, with $A\cap B=\emptyset$, it holds that
\begin{align}\label{EQN:lemma.comp.IF.01}
\IF_{A}^{B}=\frac{|\Sigma_{AA}|}{|\Sigma_{AA\given B}|}=\frac{|\Sigma_{BB}|}{|\Sigma_{BB\given A}|}.
\end{align}
Furthermore,
\begin{align}\label{EQN:lemma.comp.IF.02}
\IF_{A}^{B}
=\frac{|\Sigma_{A\cup B A\cup B}|}{|\Sigma_{AA\given B}||\Sigma_{BB\given A}|},
\end{align}
that in the special case where  $B=\bar{A}$ gives $\IF_{A}=(|K_{AA}|\times |K_{\bar{A}\bar{A}}|)/|K|$,
where $K=\Sigma^{-1}$.
\end{lemma}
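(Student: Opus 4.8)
The plan is to prove the three displayed identities in Lemma~\ref{THM:computation.of.IF} by repeated use of the Schur-complement factorisation of the determinant of a partitioned matrix, so the core tool is the formula
\begin{align}\label{EQN:schur.plan}
|\Sigma_{A\cup B\,A\cup B}|=|\Sigma_{BB}|\,|\Sigma_{AA\given B}|=|\Sigma_{AA}|\,|\Sigma_{BB\given A}|,
\end{align}
which follows from the block-determinant rule applied to the symmetric matrix $\Sigma_{A\cup B\,A\cup B}$ partitioned according to $A$ and $B$, using the definition $\Sigma_{AA\given B}=\Sigma_{AA}-\Sigma_{AB}\Sigma_{BB}^{-1}\Sigma_{BA}$ recalled in Section~\ref{SEC:notation} (and the symmetric definition of $\Sigma_{BB\given A}$). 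Here I would only need the submatrix $\Sigma_{A\cup B\,A\cup B}$ rather than the full $\Sigma$, since the inflation factor in (\ref{EQN:PWIF}) is defined purely in terms of these three blocks.

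First I would establish (\ref{EQN:lemma.comp.IF.01}). Substituting $|\Sigma_{A\cup B\,A\cup B}|=|\Sigma_{BB}|\,|\Sigma_{AA\given B}|$ into the denominator of (\ref{EQN:PWIF}) and cancelling $|\Sigma_{BB}|$ yields $\IF_{A}^{B}=|\Sigma_{AA}|/|\Sigma_{AA\given B}|$, and substituting the symmetric factorisation $|\Sigma_{A\cup B\,A\cup B}|=|\Sigma_{AA}|\,|\Sigma_{BB\given A}|$ and cancelling $|\Sigma_{AA}|$ gives the companion expression $|\Sigma_{BB}|/|\Sigma_{BB\given A}|$. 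Next I would obtain (\ref{EQN:lemma.comp.IF.02}): multiplying the two forms in (\ref{EQN:lemma.comp.IF.01}) gives $(\IF_{A}^{B})^{2}=|\Sigma_{AA}|\,|\Sigma_{BB}|/(|\Sigma_{AA\given B}|\,|\Sigma_{BB\given A}|)$, and dividing this by the defining ratio $\IF_{A}^{B}=|\Sigma_{AA}|\,|\Sigma_{BB}|/|\Sigma_{A\cup B\,A\cup B}|$ of (\ref{EQN:PWIF}) leaves exactly $\IF_{A}^{B}=|\Sigma_{A\cup B\,A\cup B}|/(|\Sigma_{AA\given B}|\,|\Sigma_{BB\given A}|)$; equivalently one may simply substitute either factorisation of (\ref{EQN:schur.plan}) directly into (\ref{EQN:lemma.comp.IF.02}) and recover (\ref{EQN:lemma.comp.IF.01}), which is a cleaner bookkeeping route.

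For the final assertion I specialise to $B=\bar{A}=V\setminus A$, so that $A\cup B=V$ and $|\Sigma_{A\cup B\,A\cup B}|=|\Sigma|=1/|K|$. The key is to invoke the two partitioned-inverse identities already recorded in Section~\ref{SEC:notation}, namely $\Sigma_{AA\given\bar{A}}=K_{AA}^{-1}$ and, by symmetry, $\Sigma_{\bar{A}\bar{A}\given A}=K_{\bar{A}\bar{A}}^{-1}$, whence $|\Sigma_{AA\given\bar{A}}|=1/|K_{AA}|$ and $|\Sigma_{\bar{A}\bar{A}\given A}|=1/|K_{\bar{A}\bar{A}}|$. Inserting these into (\ref{EQN:lemma.comp.IF.02}) gives
\begin{align}\label{EQN:final.plan}
\IF_{A}=\frac{1/|K|}{(1/|K_{AA}|)(1/|K_{\bar{A}\bar{A}}|)}=\frac{|K_{AA}|\,|K_{\bar{A}\bar{A}}|}{|K|},
\end{align}
as required. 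Since every step rests on the block-determinant factorisation and on facts already stated in the excerpt, I do not anticipate a genuine obstacle; the only point demanding care is purely clerical, namely keeping the roles of $A$ and $B$ consistent so that the two Schur complements $\Sigma_{AA\given B}$ and $\Sigma_{BB\given A}$ are paired with the correct leading blocks, and honouring the stated convention that a determinant indexed by the empty set equals $1$ so the boundary cases $A=\emptyset$ or $B=\emptyset$ reduce to $\IF_{A}^{B}=1$ without incident.
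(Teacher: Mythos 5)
Your proposal is correct and follows essentially the same route as the paper: both rest on the two Schur determinant factorisations $|\Sigma_{A\cup B\,A\cup B}|=|\Sigma_{BB}||\Sigma_{AA\given B}|=|\Sigma_{AA}||\Sigma_{BB\given A}|$ to obtain (\ref{EQN:lemma.comp.IF.01}) and (\ref{EQN:lemma.comp.IF.02}), and then specialise to $B=\bar{A}$ via the partitioned-inverse identities $\Sigma_{AA\given\bar{A}}=K_{AA}^{-1}$ and $\Sigma_{\bar{A}\bar{A}\given A}=K_{\bar{A}\bar{A}}^{-1}$. The only (immaterial) difference is that you derive (\ref{EQN:lemma.comp.IF.02}) by squaring and dividing, whereas the paper multiplies numerator and denominator of one form of (\ref{EQN:lemma.comp.IF.01}) by $|\Sigma_{AA\given B}|$.
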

\begin{proof}
From the Schur's determinant identities $|\Sigma_{A\cup B A\cup B}|=|\Sigma_{AA\given B}||\Sigma_{BB}|$ and $|\Sigma_{A\cup B A\cup B}|=|\Sigma_{BB\given A}||\Sigma_{AA}|$ one has that
\begin{align*}
\IF_{A}^{B}
=\frac{|\Sigma_{AA}||\Sigma_{BB}|}{|\Sigma_{A\cup B A\cup B}|}
=\frac{|\Sigma_{AA}||\Sigma_{BB}|}{|\Sigma_{AA\given B}||\Sigma_{BB}|}
=\frac{|\Sigma_{AA}|}{|\Sigma_{AA\given B}|},
\end{align*}
and in a similar way one can show that $\IF_{A}^{B}=|\Sigma_{BB}|/|\Sigma_{BB\given A}|$. From the latter identity one obtains that
\begin{align}\label{EQN:alt.IF.proof001}
\IF_{A}^{B}
=\frac{|\Sigma_{BB}|}{|\Sigma_{BB\given A}|}
=\frac{|\Sigma_{BB}||\Sigma_{AA\given B}|}{|\Sigma_{BB\given A}||\Sigma_{AA\given B}|}
= \frac{|\Sigma_{A\cup B A\cup B}|}{|\Sigma_{BB\given A}||\Sigma_{AA\given B}|},
\end{align}
which gives (\ref{EQN:lemma.comp.IF.02}). Finally, in the case where $B=\bar{A}$, then (\ref{EQN:alt.IF.proof001}) can be written as
\begin{align*}
\IF_{A}
=\frac{|\Sigma|}{|\Sigma_{AA\given\bar{A}}||\Sigma_{\bar{A}\bar{A}\given A}|}
=\frac{|\Sigma_{AA\given\bar{A}}^{-1}||\Sigma_{\bar{A}\bar{A}\given A}^{-1}|}{|\Sigma^{-1}|}
=\frac{|K_{AA}||K_{\bar{A}\bar{A}}|}{|K|},
\end{align*}
and this completes the proof.
\end{proof}

If we denote by $\Omega$ the correlation matrix of $X_{V}$, then one can write the inflation factor  in (\ref{EQN:PWIF}) as $\IF_{A}^{B}=(|\Omega_{AA}|\times|\Omega_{BB}|)/|\Omega_{A\cup B,A\cup B}|$. The determinant of $|\Omega|$ is a common global measure of collinearity usually justified by noting that $|\Omega|=1$ for uncorrelated variables and $|\Omega|=0$ for perfectly collinear variables. As remarked by \citet{fox1992generalized}, this suggests the following interpretation of $\IF_{A}^{B}$: the inflation factor of $A$ on $B$ represents the global collinearity of $X_{A\cup B}$ scaled by the product of the collinearity internal to each of $X_{A}$ and $X_{B}$. Based on this perspective, \citet{fox1992generalized} suggested a generalization of (\ref{EQN:PWIF}) to the case where $X_{V}$ is partitioned into $k$ sets, $A_{1},\ldots A_{k}$, given by $(|\Sigma_{A_{1}A_{1}}|\times\cdots\times |\Sigma_{A_{k}A_{k}}|)/|\Sigma|$. The latter is consistent with the measure of collinearity provided by $|\Omega|$ because in the special case where every set contains a single variable one obtains,
\begin{align}\label{EQN:gen.IF.variances}
\frac{\prod_{v\in V}\sigma_{vv}}{|\Sigma|}=\frac{1}{|\Omega|}.
\end{align}
It is of interest for us to compare the different formulations of the inflation factor provided by (\ref{EQN:PWIF}) and (\ref{EQN:lemma.comp.IF.02}). Specifically, we note that equation (\ref{EQN:lemma.comp.IF.02}) allows for an alternative interpretation of $\IF_{A}^{B}$. In linear regression the strength of the linear association between variables is commonly measured by comparing the variability of the response variable with the variability of the same variable after it is linearly adjusted for the covariates. As shown in (\ref{EQN:lemma.comp.IF.01}), also the inflation factor is feasible of this type of interpretation, and equation (\ref{EQN:lemma.comp.IF.02}) writes the ratio of variability measures in a way that is symmetrical with respect to the two sets $A$ and $B$. The determinant of $\Sigma$ is the generalized variance of $X_{V}$, whereas $|\Sigma_{AA\given B}|$ and $|\Sigma_{BB\given A}|$ are the generalized variances of $X_{A}|X_{B}$ and $X_{B}|X_{A}$, respectively. Thus, one can interpret $\IF_{A}^{B}$ as the variability of $X_{V}$ scaled by the product of the variabilities internal to each of $X_{A}|X_{B}$ and $X_{B}|X_{A}$. This allows a generalization of the inflation factor to the case where $X_{V}$ is partitioned into $k$ sets as $|\Sigma|/(|\Sigma_{A_{1}A_{1}\given \bar{A}_{1}}|\times\cdots\times |\Sigma_{A_{k}A_{k}\given\bar{A}_{k}}|)$. The case where every set contains a single variable provides a global measure of collinearity for $X_{V}$ alternative to (\ref{EQN:gen.IF.variances}),
\begin{align}\label{EQN:gen.IF.partial.variances}
\frac{|\Sigma|}{\prod_{v\in V}\sigma_{vv\given V\setminus \{v\}}},
\end{align}
which plays a central role on the interpretation of the information provided by path weights.

To further clarify the connection existing between  (\ref{EQN:gen.IF.variances})  and (\ref{EQN:gen.IF.partial.variances}), we note that Hadamard's inequality implies both that $|\Sigma|\leq \prod_{v\in V}\sigma_{vv}$ and that $|K|\leq \prod_{v\in V}\kappa_{vv}$. In turn, this implies that $\prod_{v\in V}\sigma_{vv\given V\setminus \{v\}}\leq |\Sigma|\leq \prod_{v\in V}\sigma_{vv}$, because $\kappa_{vv}=1/\sigma_{vv\given V\setminus\{v\}}$ for every $v\in V$ \citep[][Corollary~5.8.1]{whittaker1990graphical}. In this way we can see that (\ref{EQN:gen.IF.variances}) scales $|\Sigma|$ with its upper bound whereas (\ref{EQN:gen.IF.partial.variances}) scales $|\Sigma|$ with its lower bound, and both quantities are global measures of linear association that can be regarded as inflations factors because they take values in the interval $[1,\infty)$.

\section{The inflated correlation matrix and other relevant matrices}\label{SEC:inflated.correlation}
We introduce here some scaled versions of both the covariance and the concentration matrix. We first scale $K$ to have unit diagonal and write $(I-R)=\diag(K)^{-\frac{1}{2}}K\diag(K)^{-\frac{1}{2}}$, where $I$ denotes the $p\times p$ identity matrix and $R$ is a matrix with partial correlations as off-diagonal entries and zeros on the main diagonal. This follows from the fact that, if we denote by $\rho_{xy\given A}$ the partial correlation coefficient of  $X_{x}$ and $X_{y}$ given $X_{A}$ then, for every $x,y\in V$ it holds that \citep[see][p.~130]{lauritzen1996graphical},
\begin{align*}
\rho_{uv\given V\setminus \{u, v\}}
=\frac{-\kappa_{uv}}{\sqrt{\kappa_{uu}\kappa_{vv}}}
=\{R\}_{uv}\,.
\end{align*}

The correlation matrix $\Omega=\diag(\Sigma)^{-\frac{1}{2}}\Sigma\diag(\Sigma)^{-\frac{1}{2}}$ is computed by scaling $\Sigma$ to have unit diagonal. In this paper a central role is played by a different scaled version of the covariance matrix, which we denote by $\infCORR{V}$, and that can be computed as the inverse of $(I-R)$,
\begin{align}\label{EQN:def.Rinv}
\infCORR{V}=(I-R)^{-1}=\diag(K)^{\frac{1}{2}}\Sigma\diag(K)^{\frac{1}{2}}.
\end{align}
We call $\infCORR{V}$ the inflated correlation matrix because its entries, denoted by $\varrho^{V}_{uv}$ for $u,v\in V$, can be interpreted as inflated correlation coefficients. Furthermore, the determinant of $\infCORR{V}$ is the global measure of linear association between the variables in $X_{V}$ given in (\ref{EQN:gen.IF.partial.variances}).
\begin{lemma}\label{THM:interpretation.entries.Rinv}
The entries of $\infCORR{V}=\{\varrho^{V}_{uv}\}_{u,v\in V}$ are given by $\varrho_{vv}^{V}=\IF_{v}$ for every $v\in V$, whereas for every $u,v\in V$ with $v\neq u$,
\begin{align}\label{EQN:off-diag-Rinv}
    \varrho_{uv}^{V}
    =\rho_{uv}
    \times \sqrt{\IF_{u} \times \IF_{v}}\,,
\end{align}
so that for $|V|=2$ the inflated correlation $\varrho_{uv}^{V}$ in (\ref{EQN:off-diag-Rinv}) becomes $\varrho^{\{u,v\}}_{uv}=\rho_{uv}/(1-\rho_{uv}^{2})$.
Furthermore,
\begin{align*}
|\infCORR{V}|=\frac{|\Sigma|}{\prod_{v\in V}\sigma_{vv\given V\setminus\{v\}}},
\end{align*}
with $|\infCORR{V}|\geq 1$ and  $|\infCORR{V}|= 1$ if and only if $\Sigma$ is diagonal.
\end{lemma}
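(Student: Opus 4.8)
The plan is to prove the three claims of Lemma \ref{THM:interpretation.entries.Rinv} in order, starting from the two equivalent expressions for the inflated correlation matrix in (\ref{EQN:def.Rinv}), namely $\infCORR{V}=\diag(K)^{\frac{1}{2}}\Sigma\diag(K)^{\frac{1}{2}}$. For the diagonal entries, I would read off directly that $\varrho_{vv}^{V}=\kappa_{vv}\,\sigma_{vv}$. Using the relation $\kappa_{vv}=1/\sigma_{vv\given V\setminus\{v\}}$ recalled earlier in the excerpt (Corollary~5.8.1 of Whittaker), this equals $\sigma_{vv}/\sigma_{vv\given V\setminus\{v\}}$, which is precisely $\IF_{v}=\IF_{\{v\}}^{V\setminus\{v\}}$ by the first equality in (\ref{EQN:lemma.comp.IF.01}) of Lemma \ref{THM:computation.of.IF} applied with $A=\{v\}$ and $B=V\setminus\{v\}$ (the numerator and denominator are $1\times 1$ determinants).

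For the off-diagonal entries, the same scaling gives $\varrho_{uv}^{V}=\sqrt{\kappa_{uu}\kappa_{vv}}\,\sigma_{uv}$. First I would write $\sigma_{uv}=\rho_{uv}\sqrt{\sigma_{uu}\sigma_{vv}}$ from the definition of the correlation coefficient, and then regroup the radicals as $\rho_{uv}\sqrt{(\kappa_{uu}\sigma_{uu})(\kappa_{vv}\sigma_{vv})}$. Each product $\kappa_{vv}\sigma_{vv}$ has just been identified with $\IF_{v}$, so this yields (\ref{EQN:off-diag-Rinv}) immediately. For the special case $|V|=2$ with $V=\{u,v\}$, I would compute $\IF_{u}=\IF_{v}=1/(1-\rho_{uv}^{2})$ directly from the definition of the inflation factor (here the multiple correlation reduces to $|\rho_{uv}|$, or alternatively from (\ref{EQN:lemma.comp.IF.01}) since $\sigma_{uu\given v}=\sigma_{uu}(1-\rho_{uv}^{2})$), substitute into (\ref{EQN:off-diag-Rinv}), and simplify $\rho_{uv}\cdot\frac{1}{1-\rho_{uv}^{2}}$ to obtain $\rho_{uv}/(1-\rho_{uv}^{2})$.

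For the determinant, I would take determinants on both sides of $\infCORR{V}=\diag(K)^{\frac{1}{2}}\Sigma\diag(K)^{\frac{1}{2}}$, giving $|\infCORR{V}|=|\Sigma|\prod_{v\in V}\kappa_{vv}$. Substituting $\kappa_{vv}=1/\sigma_{vv\given V\setminus\{v\}}$ once more turns the product into $|\Sigma|/\prod_{v\in V}\sigma_{vv\given V\setminus\{v\}}$, which is the asserted formula and coincides with the global measure (\ref{EQN:gen.IF.partial.variances}). The bound $|\infCORR{V}|\geq 1$ then follows from the inequality $\prod_{v\in V}\sigma_{vv\given V\setminus\{v\}}\leq|\Sigma|$ established in the discussion preceding this lemma as a consequence of Hadamard's inequality applied to $K$.

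The equality case is the only part requiring a little care, so that is where I expect the main (mild) obstacle to lie. I would argue that $|\infCORR{V}|=1$ if and only if equality holds in Hadamard's inequality $|K|\leq\prod_{v\in V}\kappa_{vv}$, and that equality in Hadamard's inequality for a positive definite matrix holds precisely when the matrix is diagonal. Hence $|\infCORR{V}|=1$ iff $K$ is diagonal, and since $K$ is diagonal iff $\Sigma=K^{-1}$ is diagonal, this is equivalent to $\Sigma$ being diagonal, completing the proof.
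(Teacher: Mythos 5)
Your proposal is correct and follows essentially the same route as the paper: read the entries off $\diag(K)^{\frac{1}{2}}\Sigma\diag(K)^{\frac{1}{2}}$, identify $\kappa_{vv}\sigma_{vv}$ with $\IF_{v}$ via $\kappa_{vv}=1/\sigma_{vv\given V\setminus\{v\}}$ and Lemma~\ref{THM:computation.of.IF}, take determinants for the product formula, and invoke Hadamard's inequality (the paper phrases it as $|I-R|\leq 1$ with equality iff $R=0$, which is the same application of Hadamard to the scaled concentration matrix as your equality-case argument for $K$). No gaps.
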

\begin{proof}
The entries of $\infCORR{V}=\diag(K)^{\frac{1}{2}}\Sigma\diag(K)^{\frac{1}{2}}$ can be computed as $\varrho^{V}_{uv}=\sigma_{uv}\sqrt{\kappa_{uu}\kappa_{vv}}$, and because $\kappa_{uu}=1/\sigma_{uu\given V\setminus\{u\}}$ and $\kappa_{vv}=1/\sigma_{vv\given V\setminus\{v\}}$ it follows that for every $u,v\in V$,
\begin{align}\label{EQN:proof.varrho}
  \varrho^{V}_{uv}
  =\frac{\sigma_{uv}}{\sqrt{\sigma_{uu\given V\setminus \{u\}}\sigma_{vv\given V\setminus \{v\}}}}
  =\frac{\sigma_{uv}}{\sqrt{\sigma_{uu}\sigma_{vv}}}
  \sqrt{\frac{\sigma_{uu}}{\sigma_{uu\given V\setminus \{u\}}}}
  \sqrt{\frac{\sigma_{vv}}{\sigma_{vv\given V\setminus \{v\}}}}.
\end{align}
For $u=v$, equation (\ref{EQN:proof.varrho}) becomes  $\varrho^{V}_{uu}=\sigma_{uu}/\sigma_{uu\given V\setminus \{u\}}$ so that, by Lemma~\ref{THM:computation.of.IF}, $\varrho^{V}_{uu}=\IF_{v}$. Similarly, for $u\neq v$ equation (\ref{EQN:proof.varrho}) and Lemma~\ref{THM:computation.of.IF} give (\ref{EQN:off-diag-Rinv}), as required. For $V=\{u,v\}$ it holds that $\IF_{u}=\IF_{v}=1/(1-\rho_{uv}^{2})$ so that (\ref{EQN:off-diag-Rinv}) can be written as $\varrho^{\{u,v\}}_{uv}=\rho_{uv}/(1-\rho_{uv}^{2})$.

The determinant of $\infCORR{V}$ can be computed from (\ref{EQN:def.Rinv}) as
\begin{align*}
|\infCORR{V}| = |\diag(K)||\Sigma|= |\Sigma|\prod_{v\in V}\kappa_{vv}=
\frac{|\Sigma|}{\prod_{v\in V}\sigma_{vv\given V\setminus\{v\}}}\,,
\end{align*}
whereas to show that $|\infCORR{V}|\geq 1$ it is sufficient to notice that $|\infCORR{V}|=1/|I-R|$ and that $|I-R|\leq 1$ by Hadamard's inequality. Furthermore, $|I-R|=1$ if and only if $R=0$, that is, if and only if  $K$, or equivalently $\Sigma$, is  diagonal.
\end{proof}
Hence, the off-diagonal entry $\varrho^{V}_{uv}$ of $\infCORR{V}$ is the correlation $\rho_{uv}$ inflated by a factor that is the product of the square root of the  inflation factors of $X_{u}$ and $X_{v}$ on $X_{V\setminus\{u\}}$ and $X_{V\setminus\{v\}}$, respectively. It is worth remarking that, unlike $\rho_{uv}$, which is a measure of marginal association, the computation of $\varrho_{uv}^{V}$ involves the joint distribution of $X_{V}$. Furthermore, it can be shown that $\varrho^{V}_{uv}\geq \varrho^{A}_{uv}$ for every $u,v\in A\subseteq V$ because $\IF_{v}^{V\setminus \{v\}}\geq \IF_{v}^{A\setminus \{v\}}$; see also \citet{pmlr-v72-roverato18a}. It is also interesting to recall that if the spectral radius of $R$ is smaller then 1, then $\lim_{k\rightarrow \infty} R^{k}=0$ and $\infCORR{V}=(I-R)^{-1}=I+R+R^{2}+\cdots$.

In the following we will also consider both correlations and inflated correlations computed with respect to the distribution of $X_{A}|X_{\bar{A}}$. The corresponding matrices are given by $\Omega_{,[A| \bar{A}]}=\{\rho_{uv\given \bar{A}}\}_{u,v\in A}$ and $\infCORR{A}_{AA\given \bar{A}}=\{\varrho^{A}_{uv\given \bar{A}}\}_{u,v\in A}$, respectively, where the different types of subscript used in the notation reflect the different ways in which the two matrices are computed. Indeed, similarly to $\Sigma_{AA\given\bar{A}}$, the matrix $\infCORR{A}_{AA\given \bar{A}}$ can be computed as
$\infCORR{A}_{AA\given \bar{A}}=\infCORR{V}_{AA}-\infCORR{V}_{A\bar{A}}(\infCORR{V}_{\bar{A}\bar{A}})^{-1}\infCORR{V}_{\bar{A}A}$ so that $\infCORR{A}_{AA\given \bar{A}}=(I-R)_{AA}^{-1}$. On the other hand,  $\Omega_{[A| \bar{A}]}$ is obtained by scaling $\Sigma_{AA\given \bar{A}}$ so that, in general, $\Omega_{[A| \bar{A}]}\neq \Omega_{AA\given\bar{A}}$.

\section{Interpretation of covariance path weights}\label{SEC:decomposition.weight}
Our approach to the interpretation of covariance weights relies on a key  factorization of these quantities. More specifically, we show that any covariance weight can be written as the product of  a partial covariance weight by an inflation factor. The analysis of these two components allows one to gain insight into the meaning of path weights and the kind of information they provide.

If $K$ is adapted to the graph $\G=(V, \E)$, and $\Pt$ is a path  between $x$ and $y$ in $\G$, then it follows that $\Pt$ is also a path in $\G_{A}=(A, \E_{A})$ for any $A$ such that $V(\Pt)\subseteq A\subseteq V$. On the other hand, the concentration matrix $K_{AA}=\Sigma_{AA\given \bar{A}}^{-1}$ of $X_{A}|X_{\bar{A}}$ is adapted to the subgraph $\G_{A}$, and therefore it makes sense to compute
\begin{align*}
  \w(\Pt, \Sigma_{AA\given \bar{A}})
  =  (-1)^{|P|+1}\;\frac{|K_{A\backslash P A\backslash P}|}{|K_{AA}|}\;\prod_{\{u,v\}\in \E(\Pt)} \kappa_{uv},
\end{align*}
which is the covariance weight of $\Pt$ with respect to the distribution of $X_{A}|X_{\bar{A}}$, i.e., the partial covariance weight of $\Pt$ relative to $X_{A}|X_{\bar{A}}$. An immediate consequence of Theorem~\ref{THM:jones.west} is that $\w(\Pt, \Sigma_{AA\given\bar{A}})$ represents the contribution of the path $\Pt$ to the partial covariance $\sigma_{xy\given\bar{A}}$.
\begin{corollary}\label{THM:jones.west.special.case1}
Let $K=\Sigma^{-1}$ be the concentration matrix of $X_{V}$. If $K$ is adapted to the graph $\G=(V, \E)$, then for  every $A\subseteq V$ and  $x,y\in A$, $x\neq y$, it holds that
\begin{eqnarray}\label{EQN:cor.jones.west.special.case1}
\sigma_{xy\given\bar{A}}
    &=&\sum_{\Pt\in \PT_{xy}^{A}} \w(\Pt, \Sigma_{AA\given\bar{A}}).
\end{eqnarray}
\end{corollary}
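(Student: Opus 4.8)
The plan is to apply Theorem~\ref{THM:jones.west} not to $X_{V}$ itself but to the conditional vector $X_{A}\mid X_{\bar{A}}$, whose second-order structure is governed by $\Sigma_{AA\given \bar{A}}$. Since Theorem~\ref{THM:jones.west} is an algebraic statement about a covariance matrix and its inverse adapted to a graph, the entire argument reduces to checking that $\Sigma_{AA\given \bar{A}}$ and $K_{AA}$ play the roles of $\Sigma$ and $K$ over the induced subgraph $\G_{A}$, and that the relevant path collection is exactly $\PT^{A}_{xy}$.

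First I would recall the two facts established in Section~\ref{SEC:notation}. The matrix $\Sigma_{AA\given \bar{A}}$ is the covariance matrix of $X_{A}\mid X_{\bar{A}}$, and by the partitioned-inverse identity its inverse is $(\Sigma_{AA\given \bar{A}})^{-1}=K_{AA}$. Moreover, because $K$ is adapted to $\G$, the submatrix $K_{AA}$ is adapted to the induced subgraph $\G_{A}=(A,\E_{A})$. Hence $\Sigma_{AA\given \bar{A}}$ is a covariance matrix indexed by $A$ whose concentration matrix $K_{AA}$ is adapted to $\G_{A}$, which is precisely the setting required by Theorem~\ref{THM:jones.west}.

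Next I would invoke Theorem~\ref{THM:jones.west} with $A$, $\G_{A}$ and $K_{AA}$ in the roles of $V$, $\G$ and $K$. For $x,y\in A$ the theorem then expresses the $(x,y)$ entry of $\Sigma_{AA\given \bar{A}}$, namely $\sigma_{xy\given\bar{A}}$, as the sum over all paths between $x$ and $y$ in $\G_{A}$ of the corresponding weights. Each weight is obtained from~(\ref{EQN:jones.and.west.path.weight1}) by substituting $A$ for $V$, $A\setminus P$ for $\bar{P}=V\setminus P$, and $|K_{AA}|$ for $|K|$; this reproduces exactly the expression for $\w(\Pt,\Sigma_{AA\given\bar{A}})$ displayed just before the statement. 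Finally, the set of paths between $x$ and $y$ in $\G_{A}$ coincides with $\PT^{A}_{xy}$ by the characterisation given in Section~\ref{SEC:notation} (a path lies in $\PT^{A}_{xy}$ if and only if it is a path in $\G$ with all of its vertices in $A$), so the summation index matches and~(\ref{EQN:cor.jones.west.special.case1}) follows.

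There is no serious obstacle here, as the result is a direct specialisation of Theorem~\ref{THM:jones.west}. The only point demanding a little care is the bookkeeping in the weight formula: one must confirm that the determinantal factor specialises from $|K_{\bar{P}\bar{P}}|/|K|$ to $|K_{A\setminus P\,A\setminus P}|/|K_{AA}|$, and that the product $\prod_{\{u,v\}\in\E(\Pt)}\kappa_{uv}$ is unchanged because every vertex of $\Pt$ lies in $A$, so that the edges $\E(\Pt)$ and the entries $\kappa_{uv}$ are identical whether read off $K$ or off $K_{AA}$.
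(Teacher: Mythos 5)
Your proposal is correct and follows exactly the paper's own argument: apply Theorem~\ref{THM:jones.west} to the conditional distribution $X_{A}\mid X_{\bar{A}}$, whose concentration matrix $K_{AA}=(\Sigma_{AA\given\bar{A}})^{-1}$ is adapted to $\G_{A}$, and identify the paths of $\G_{A}$ with $\PT^{A}_{xy}$. The only difference is that you spell out the bookkeeping in the weight formula, which the paper leaves implicit.
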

\begin{proof}
The result follows by applying Theorem~\ref{THM:jones.west} to the distribution of $X_{A}|X_{\bar{A}}$ and noticing that the set of paths $\Pt\in \PT_{xy}^{A}$ coincides with the set of all paths between $x$ and $y$ in $\G_{A}$.
\end{proof}
It is worth remarking that equation (\ref{EQN:cor.jones.west.special.case1}) can be regarded as a generalization of the covariance decomposition in (\ref{EQN:in.thm.jones.west}), as it coincides with the latter when $A=V$.

The theory of path weights we develop here relies on the existing connection between  the covariance weight and partial covariance weight of a path. Specifically, the following result  provides a rule to compute the partial covariance weight of a path $\Pt$  from the covariance weight of $\Pt$. An interesting consequence of it  is that the inflation factor arises naturally as an updating multiplicative constant.
\begin{theorem}\label{THM:two.components.of.path.weights}
If the matrix  $K=\Sigma^{-1}$ is adapted to the graph  $\G=(V, \E)$, then for any path $\Pt$ in $\G$ and subset $A\subseteq V$ such that $V(\Pt)\subseteq A$, it holds that
\begin{eqnarray*}
\w(\Pt, \Sigma)=\w(\Pt, \Sigma_{AA\given \bar{A}})\times \IF_{P}^{\bar{A}}
\end{eqnarray*}
where $P=V(\Pt)$. Furthermore,
\begin{eqnarray*}
   \mathrm{sgn}\left\{\w(\Pt, \Sigma)\right\}
   =
   \mathrm{sgn}\left\{\w(\Pt, \Sigma_{AA\given\bar{A}})\right\},
  \qquad
    |\w(\Pt, \Sigma)|
  \geq
    |\w(\Pt, \Sigma_{AA\given\bar{A}})|
\end{eqnarray*}
where, for nonzero weights, the latter inequality  is an equality if and only if $\IF_{P}^{\bar{A}}=1$.
\end{theorem}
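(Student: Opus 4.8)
The plan is to compare the two weight expressions directly by taking their ratio, since both $\w(\Pt,\Sigma)$ and $\w(\Pt,\Sigma_{AA\given\bar{A}})$ share the same sign factor $(-1)^{|P|+1}$ and the same edge product $\prod_{\{u,v\}\in\E(\Pt)}\kappa_{uv}$. By the defining formulas, we have
\begin{align*}
\w(\Pt,\Sigma)
&= (-1)^{|P|+1}\;\frac{|K_{\bar{P}\bar{P}}|}{|K|}\;\prod_{\{u,v\}\in\E(\Pt)}\kappa_{uv},\\
\w(\Pt,\Sigma_{AA\given\bar{A}})
&= (-1)^{|P|+1}\;\frac{|K_{A\setminus P\,A\setminus P}|}{|K_{AA}|}\;\prod_{\{u,v\}\in\E(\Pt)}\kappa_{uv},
\end{align*}
where the concentration matrix of $X_{A}|X_{\bar{A}}$ is $K_{AA}$, and where $A\setminus P$ plays, within $\G_{A}$, the role that $\bar{P}$ plays within $\G$. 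Cancelling the common sign and edge-product factors, the ratio reduces to
\begin{align*}
\frac{\w(\Pt,\Sigma)}{\w(\Pt,\Sigma_{AA\given\bar{A}})}
= \frac{|K_{\bar{P}\bar{P}}|\,/\,|K|}{|K_{A\setminus P\,A\setminus P}|\,/\,|K_{AA}|}
= \frac{|K_{\bar{P}\bar{P}}|\;|K_{AA}|}{|K|\;|K_{A\setminus P\,A\setminus P}|},
\end{align*}
so the whole theorem comes down to showing that this ratio equals $\IF_{P}^{\bar{A}}$.

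The core step, then, is to identify the right-hand side with $\IF_{P}^{\bar{A}}$. First I would invoke the special case of Lemma~\ref{THM:computation.of.IF} that reads $\IF_{S}=(|K_{SS}|\times|K_{\bar{S}\bar{S}}|)/|K|$ for the full vector $X_V$; applied with $S=P$ this gives $|K_{\bar{P}\bar{P}}|/|K|=\IF_{P}/|K_{PP}|$. The subtlety is that $\IF_{P}=\IF_{P}^{\bar{P}}$ uses the complement $\bar{P}=V\setminus P$, whereas the target $\IF_{P}^{\bar{A}}$ conditions only on $\bar{A}$. The natural device is to work inside the distribution of $X_A\mid X_{\bar A}$, whose concentration matrix is $K_{AA}$ and whose covariance is $\Sigma_{AA\given\bar A}$. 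Within that $|A|$-dimensional world, the complement of $P$ is $A\setminus P$, and $P\cup(A\setminus P)=A$, so applying the same special case of Lemma~\ref{THM:computation.of.IF} to $K_{AA}$ with the subset $P$ yields $|K_{A\setminus P\,A\setminus P}|/|K_{AA}|=\IF_{P}^{A\setminus P}(X_A\mid X_{\bar A})/|K_{PP}|$, where the inflation factor is computed in the conditional distribution. Substituting both identities, the factors $|K_{PP}|$ cancel and the ratio collapses to a quotient of two inflation factors that I expect to be exactly $\IF_{P}^{\bar A}$; confirming this identification is the main obstacle.

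I anticipate that the cleanest route to pin down $\IF_{P}^{\bar A}$ is not through two conditional inflation factors but through the determinant form of Lemma~\ref{THM:computation.of.IF}, equation~(\ref{EQN:lemma.comp.IF.02}), together with the relations $\Sigma_{AA\given\bar A}=K_{AA}^{-1}$ and $\Sigma_{PP\given\bar P}=K_{PP}^{-1}$ noted in Section~\ref{SEC:notation}. Writing $\IF_{P}^{\bar A}=|\Sigma_{PP}|/|\Sigma_{PP\given\bar A}|$ from~(\ref{EQN:lemma.comp.IF.01}) and expressing both partial-variance determinants via the appropriate Schur complements of $K$ should let me rewrite $\IF_{P}^{\bar A}$ directly as $(|K_{\bar P\bar P}|\,|K_{AA}|)/(|K|\,|K_{A\setminus P\,A\setminus P}|)$. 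Once the multiplicative identity $\w(\Pt,\Sigma)=\w(\Pt,\Sigma_{AA\given\bar A})\times\IF_{P}^{\bar A}$ is established, the remaining claims are immediate: since $\IF_{P}^{\bar A}\geq 1$ by positive definiteness (as recorded after~(\ref{EQN:PWIF})), the two weights share the same sign and $|\w(\Pt,\Sigma)|\geq|\w(\Pt,\Sigma_{AA\given\bar A})|$, with equality for nonzero weights precisely when $\IF_{P}^{\bar A}=1$. The only point needing care is the degenerate reading when a weight vanishes, but the sign and magnitude statements are framed for nonzero weights, so the factorization settles everything.
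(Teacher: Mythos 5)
Your proposal is correct and follows essentially the same route as the paper: both arguments rewrite the two weights via the determinant identities $|\Sigma_{PP}|=|K_{\bar P\bar P}|/|K|$ and $|\Sigma_{PP\given\bar A}|=|K_{A\setminus P\,A\setminus P}|/|K_{AA}|$ and then identify the resulting ratio with $\IF_{P}^{\bar A}=|\Sigma_{PP}|/|\Sigma_{PP\given\bar A}|$ from Lemma~\ref{THM:computation.of.IF}, the sign and magnitude claims following from $\IF_{P}^{\bar A}\geq 1$. The only cosmetic differences are that you phrase the key step as a ratio of weights (which silently assumes a nonzero denominator, whereas the paper multiplies and divides by the always-positive $|\Sigma_{PP\given\bar A}|$), and that your intermediate detour through conditional inflation factors is unnecessary, as you yourself conclude.
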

\begin{proof}
Recall that $|\Sigma_{PP}|=|K_{\bar{P} \bar{P}}|/|K|$, whereas $|\Sigma_{PP\given\bar{A}}|=|K_{A\backslash P A\backslash P}|/|K_{AA}|$. Hence, we can write
\begin{align*}
\w(\Pt, \Sigma)
    &=  (-1)^{|P|+1}\;|\Sigma_{PP}|\;\prod_{\{u,v\}\in \E(\Pt)} \kappa_{uv}
    =  (-1)^{|P|+1}\;|\Sigma_{PP\given\bar{A}|}\;\prod_{\{u,v\}\in \E(\Pt)} \kappa_{uv} \times \frac{|\Sigma_{PP}|}{|\Sigma_{PP\given\bar{A}}|}\\
    &= \w(\Pt, \Sigma_{AA\given  \bar{A}})\times \IF_{P}^{\bar{A}},
\end{align*}
where the inflation factor is computed as in Lemma~\ref{THM:computation.of.IF}. The remaining statements follow immediately from the fact that $\IF_{P}^{\bar{A}}\geq 1$.
\end{proof}

Theorem~\ref{THM:two.components.of.path.weights} shows that for any $A\subseteq V$ such that $V(\Pt)\subseteq A$, the partial covariance weight of the path $\Pt$ relative to $X_{A}|X_{\bar{A}}$ can be obtained as the product of the covariace weight of $\Pt$ by a term that is a function of the linear association between the variables indexed by the vertices in the path  $P=V(\Pt)$  and the variables indexed by $\bar{A}$. Next, we notice that for a given path $\Pt$ one can compute a weight $\w(\Pt, \Sigma_{AA\given\bar{A}})$ for every $A$ such that $P\subseteq A\subseteq V$, thereby obtaining a collection of weights for $\Pt$. Furthermore, one can iteratively apply Theorem~\ref{THM:two.components.of.path.weights} to see that $|\w(\Pt, \Sigma_{PP\given\bar{P}})|\leq |\w(\Pt, \Sigma_{AA\given\bar{A}})|\leq |\w(\Pt, \Sigma)|$
for every $A$ such that $P\subseteq A\subseteq V$. Hence, $\w(\Pt, \Sigma_{PP\given\bar{P}})$ is the smallest partial covariance weight of $\Pt$ and the factorization
\begin{align}\label{EQN:weight.decoposition}
  \w(\Pt, \Sigma)&=\w(\Pt, \Sigma_{PP\given\bar{P}})\times \IF_{P}
\end{align}
plays a central role among all the possible decompositions of $\w(\Pt, \Sigma)$ implied by  Theorem~\ref{THM:two.components.of.path.weights}.
Equation (\ref{EQN:weight.decoposition}) shows that $\w(\Pt, \Sigma)$ is obtained by  multiplying the partial path weight $\w(\Pt, \Sigma_{PP\given \bar{P}})$ by the inflation factor $\IF_{P}$. We note that $\w(\Pt, \Sigma_{PP\given\bar{P}})$ and $\IF_{P}$ provide two clearly distinct pieces of information. Specifically:
\begin{enumerate}
  \item[(a)] $\w(\Pt, \Sigma_{PP\given\bar{P}})$ is the covariance weight of the path $\Pt$ linearly adjusted for all the variables not involved in the path. Hence, as far as linear relationships are concerned, this quantity provides no information on how the path $\Pt$ interacts with the variables in the rest of the network. This kind of interpretation is even stronger in the case where the variables are jointly Gaussian because  $\w(\Pt, \Sigma_{PP\given\bar{P}})$ is the weight of the path $\Pt$ computed in the conditional distribution of $X_{P}|X_{\bar{P}}$.
  \item[(b)] The inflation factor $\IF_{P}$ depends on $\Pt$ only through its vertex set $P=V(\Pt)$ and it is a measure of the strength of the association of the variables in the path with the remaining variables. For instance, when the variables indexed by $V(\Pt)$ are disconnected from the rest of the network, it holds that $\IF_{P}=1$ and therefore $\w(\Pt, \Sigma)=\w(\Pt, \Sigma_{PP\given\bar{P}})$.
\end{enumerate}

The factorization (\ref{EQN:weight.decoposition}) splits a covariance weight into two clearly distinct pieces of information. However,  while the value of the inflation factor has a clear interpretation, the partial covariance weight requires some additional consideration to clarify its meaning. If we apply Corollary~\ref{THM:jones.west.special.case1} with $A=P=V(\Pt)$ then we can write
\begin{eqnarray}\label{EQN:par.cov.sum}
  \sigma_{xy\given\bar{P}}=\sum_{\Pt^{\prime}\in \PT_{xy}^{P}} \w(\Pt^{\prime}, \Sigma_{PP\given\bar{P}}).
\end{eqnarray}
Thus, $\w(\Pt, \Sigma_{PP\given\bar{P}})$ in (\ref{EQN:weight.decoposition}) can be interpreted as the contribution of the path $\Pt$ to the partial covariance between $X_{x}$ and $X_{y}$ given all the variables that do not belong to the path. Hereafter, we consider some special cases of interest where the interpretation   of $\w(\Pt, \Sigma_{PP\given\bar{P}})$ is specially straightforward, whereas in the next section we address this issue with respect to normalized measures of association.

A first case of interest is when the weights of the paths in $\PT^{P}_{xy}$  are either all positive or all negative; recall that, by Theorem~\ref{THM:two.components.of.path.weights}, $\w(\Pt, \Sigma)$ has the same sign as any partial covariance weight of $\Pt$. In that case, (\ref{EQN:par.cov.sum}) meaningfully decomposes $\sigma_{xy\given\bar{P}}$ along the partial paths in $\PT^{A}_{xy}$, each contributing to a proportion of the latter covariance. In this context, a relevant role is played by the family of Gaussian totally positive distributions of order two. \citet{fallat2017total} and \citet{lauritzen2019maximum} studied the multivariate association structure of totally positive distributions of order two, and showed that they satisfy useful properties within the undirected graphical model framework. Interestingly, a Gaussian distribution is totally positive  of order two if and only if all the entries of $R$ are either zero or positive \citep{karlin1983m,fallat2017total}. It follows immediately that these distributions have the additional property that, for any path, both the weight and all of the partial weights are positive. More generally, a random variable $X_{V}$ has a signed totally positive distribution of order two if there exists a diagonal matrix $\Delta=\{\delta_{vv}\}_{v\in V}$ with $\delta_{vv}=\pm 1$ such that the distribution of $\Delta X_{V}$ is totally positive of order two \citep{karlin1981total,lauritzen2019maximum}. In the Gaussian case, for this wider family of distributions the weights of the paths can also be given a clear interpretation because it follows from Lemma~\ref{THM:phi.almost.scale.invariant} of Section~\ref{SEC:scale.transform} that in Gaussian signed totally positive distributions of order two, for any pair $x,y\in V$ the paths $\PT_{xy}$ are either all positive or all negative.

A second special case is when the path $\Pt$ has no chords. A chord of a path $\Pt$ is any edge joining a pair of nonadjacent vertices of $\pi$, that is $\{v_{i},v_{j}\}\in \E$ is a chord of $\pi$ if $\{v_{i},v_{j}\}\subseteq V(\pi)$ and $\{v_{i},v_{j}\}\not\in \E(\pi)$. A path is said to be chordless if it has no chords \citep{pelayo2013geodesic}, and we note that a path $\Pt$ between $x$ and $y$ is chordless if and only if $\PT_{xy}^{P}=\{\Pt\}$ and thus, because by (\ref{EQN:par.cov.sum}) it holds that $\w(\Pt, \Sigma_{PP\given\bar{P}})=\sigma_{xy\given\bar{P}}$, then (\ref{EQN:weight.decoposition}) takes the form,
\begin{align}\label{EQN:weight.chordless.path}
\w(\Pt, \Sigma)&=\sigma_{xy\given\bar{P}}\times \IF_{P}.
\end{align}
Equation (\ref{EQN:weight.chordless.path}) shows that the covariance weight of a chordless path  has a precise interpretation because it can be written as a straightforward combination of two well-established quantities: a partial covariance and an inflation factor. \citet{roverato2017networked} considered the path weights  (\ref{EQN:weight.chordless.path}) for the particular case of paths made up of a single edge. Every edge of the graph is a chordless path, and equation (\ref{EQN:weight.chordless.path}) extends the result of \citet{roverato2017networked} to arbitrary chordless paths.

Finally, we consider the case where $\Pt$ is the unique path between $x$ and $y$ in $\G$, that is, $\PT_{xy}=\{\Pt\}$. It follows immediately from (\ref{EQN:in.thm.jones.west}) that $\w(\Pt, \Sigma)=\sigma_{xy}$, but on the other hand $\PT_{xy}=\{\Pt\}$ implies $\PT^{P}_{xy}=\{\Pt\}$, i.e., $\Pt$ is a chordless path, so (\ref{EQN:weight.chordless.path}) applies, leading to a decomposition of the covariance into a partial covariance and an inflation factor,
\begin{align}\label{EQN:weight.unique.path}
\sigma_{xy}&=\sigma_{xy\given\bar{P}}\times \IF_{P}.
\end{align}
Equation (\ref{EQN:weight.unique.path}) allows one to clarify the mechanism by which the covariance $\sigma_{xy}$ updates its value when the distribution of $X_{\{x,y\}}$ is adjusted for $X_{\bar{P}}$. It implies, for instance, that $\sigma_{xy}$ and $\sigma_{xy\given\bar{P}}$  have the same sign and that $|\sigma_{xy}|\geq |\sigma_{xy\given\bar{P}}|$. Equation  (\ref{EQN:weight.unique.path}) cannot be applied to arbitrary paths; nevertheless there are relevant instances of models involving unique paths. More concretely, graphical models with a tree structure play  an important role due to their computational tractability; see, among others, \citet{edwards2010selecting}, \citet{choi2011learning}, \citet{lafferty2012sparse} and \citet[][Section~7.4]{hojsgaard2012graphical}. Since in a tree  $|\Pi_{xy}|=1$ for every $x,y\in V$ with $x\neq y$, then in concentration tree models the relationship (\ref{EQN:weight.unique.path}) holds true for all pairs of variables \citep{pmlr-v72-roverato18a}.

\section{Generalized path weights}\label{SEC:scale.transform}
\subsection{Correlation and inflated correlation path weights}
The work of \citet{jones2005covariance} focuses on the covariance matrix $\Sigma$ of $X_{V}$ and provides a decomposition of the entries of this matrix along the paths of a concentration graph $\G$ of $X_{V}$. We note that such a decomposition relies on the fact that $\Sigma^{-1}$ is adapted to $\G$, and therefore can be immediately applied to any positive-definite matrix $\Gamma=\{\gamma_{uv}\}_{u,v\in V}$ whose inverse $\Gamma^{-1}=\Theta=\{\theta_{uv}\}_{u,v\in V}$ is adapted to $\G$. Here, we consider an arbitrary matrix $\Gamma$ obtained as $\Gamma=\Delta\Sigma\Delta$ where $\Delta=\{\delta_{uv}\}_{u,v\in V}$ is a diagonal matrix with nonzero diagonal entries. Note that both $\Omega$ and $\infCORR{V}$ can be obtained in this way by setting $\Delta=\diag(\Sigma)^{-\frac{1}{2}}$ and $\Delta=\diag(K)^{\frac{1}{2}}$, respectively. Indeed, correlations and inflated correlations are both normalized measures of association, and for this reason it is of interest to compute and interpret the values of the corresponding weights.

There exist two possible ways to exploit  Theorem~\ref{THM:jones.west} to obtain a decomposition of an off-diagonal entry $\gamma_{xy}$ of $\Gamma$. The first is to replace $\Sigma$ by $\Gamma$ in (\ref{EQN:in.thm.jones.west}) and (\ref{EQN:jones.and.west.path.weight1}) thereby obtaining $\gamma_{xy}=\sum_{\Pt\in \PT_{xy}} \w(\Pt, \Gamma)$. On the other hand, one can consider the basic decomposition $\sigma_{xy}=\sum_{\Pt\in \PT_{xy}} \w(\Pt, \Sigma)$ in (\ref{EQN:in.thm.jones.west}) and then multiply both sides by $\delta_{xx}\delta_{yy}$ to obtain $\gamma_{xy}=\sum_{\Pt\in \PT_{xy}} \delta_{xx}\delta_{yy}\w(\Pt, \Sigma)$. The following  result shows that these two approaches are equivalent because they lead to identical decompositions.
\begin{lemma}\label{THM:phi.almost.scale.invariant}
If the matrix  $K=\Sigma^{-1}$ is adapted to the graph  $\G=(V, \E)$ and $\Gamma=\Delta\Sigma\Delta$ where $\Delta=\{\delta_{uv}\}_{u,v\in V}$ is a diagonal matrix with $\delta_{vv}\neq 0$ for all $v\in V$, then for any path $\Pt$ between $x$ and $y$ in $\G$ it holds that
$\w(\Pt, \Gamma)=\delta_{xx}\delta_{yy}\,\w(\Pt, \Sigma)$.
\end{lemma}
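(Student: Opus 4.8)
The plan is to verify the claim $\w(\Pt, \Gamma)=\delta_{xx}\delta_{yy}\,\w(\Pt, \Sigma)$ by comparing the two weights factor-by-factor using the explicit formula (\ref{EQN:jones.and.west.path.weight1}). Both weights share the same sign factor $(-1)^{|P|+1}$ and range over the same edge set $\E(\Pt)$, since $\Gamma$ and $\Sigma$ have inverses adapted to the same graph $\G$; indeed $\Theta=\Gamma^{-1}=\Delta^{-1}K\Delta^{-1}$, so $\theta_{uv}=0$ exactly when $\kappa_{uv}=0$, confirming that $\Theta$ is adapted to $\G$ and that the path weight $\w(\Pt,\Gamma)$ is well-defined via the same formula. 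Thus the identity reduces to tracking how the diagonal scaling propagates through the two remaining ingredients: the edge-product $\prod_{\{u,v\}\in\E(\Pt)}\theta_{uv}$ and the determinant ratio $|\Theta_{\bar P\bar P}|/|\Theta|$.

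First I would handle the edge product. Since $\theta_{uv}=\kappa_{uv}/(\delta_{uu}\delta_{vv})$ for the off-diagonal entries, the product over the $|P|-1$ edges of $\Pt$ telescopes: each interior vertex $v_i$ (for $2\le i\le k-1$) appears in exactly two consecutive edges and so contributes $\delta_{v_iv_i}^{-2}$, while the endpoints $x$ and $y$ appear in exactly one edge each and contribute $\delta_{xx}^{-1}$ and $\delta_{yy}^{-1}$. Hence $\prod_{\{u,v\}\in\E(\Pt)}\theta_{uv}=\delta_{xx}^{-1}\delta_{yy}^{-1}\Big(\prod_{v\in P\setminus\{x,y\}}\delta_{vv}^{-2}\Big)\prod_{\{u,v\}\in\E(\Pt)}\kappa_{uv}$.

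Next I would compute the determinant ratio. Writing $\Theta=\Delta^{-1}K\Delta^{-1}$, the principal submatrix satisfies $\Theta_{\bar P\bar P}=\Delta_{\bar P\bar P}^{-1}K_{\bar P\bar P}\Delta_{\bar P\bar P}^{-1}$, so $|\Theta|=|K|\prod_{v\in V}\delta_{vv}^{-2}$ and $|\Theta_{\bar P\bar P}|=|K_{\bar P\bar P}|\prod_{v\in\bar P}\delta_{vv}^{-2}$. Their ratio is therefore $(|K_{\bar P\bar P}|/|K|)\prod_{v\in P}\delta_{vv}^{2}$. Multiplying this by the edge-product computed above, the factors $\prod_{v\in P\setminus\{x,y\}}\delta_{vv}^{-2}$ cancel against $\prod_{v\in P}\delta_{vv}^{2}=\delta_{xx}^2\delta_{yy}^2\prod_{v\in P\setminus\{x,y\}}\delta_{vv}^2$, leaving a net factor of $\delta_{xx}\delta_{yy}$ multiplying the corresponding product for $\Sigma$. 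Collecting everything and reinserting the common sign $(-1)^{|P|+1}$ yields $\w(\Pt,\Gamma)=\delta_{xx}\delta_{yy}\,\w(\Pt,\Sigma)$.

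The main obstacle is purely bookkeeping rather than conceptual: one must correctly count the multiplicity with which each vertex's scale factor $\delta_{vv}$ appears, distinguishing the endpoints (multiplicity one in the edge product) from the interior path vertices (multiplicity two), and then confirm that the powers of $\delta_{vv}$ arising from the determinant ratio exactly cancel those from the edge product for every interior vertex while leaving precisely $\delta_{xx}\delta_{yy}$ surviving. The only point requiring mild care is the observation that $\Theta$ is genuinely adapted to $\G$, so that the weight formula applies verbatim and both sums range over the identical path set $\PT_{xy}$; this is immediate from the diagonal form of $\Delta$.
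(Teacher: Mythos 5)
Your proposal is correct and follows essentially the same route as the paper: both compute $\w(\Pt,\Gamma)$ directly from formula (\ref{EQN:jones.and.west.path.weight1}) using $\Gamma^{-1}=\Delta^{-1}K\Delta^{-1}$, evaluate the determinant ratio as $(|K_{\bar P\bar P}|/|K|)\prod_{v\in P}\delta_{vv}^{2}$, and cancel this against the edge product by counting that interior path vertices contribute $\delta_{vv}^{-2}$ while the endpoints contribute $\delta_{xx}^{-1}\delta_{yy}^{-1}$. Your preliminary check that $\Theta$ is adapted to the same graph $\G$ is a point the paper states only in the surrounding text, but the computation itself is identical.
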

\begin{proof}
If $K=\Sigma^{-1}$ then   $(\Delta\Sigma\Delta)^{-1}=\Delta^{-1}K\Delta^{-1}$ and $|\Delta^{-1}K\Delta^{-1}|=\frac{|K|}{\prod_{v\in V}\delta^{2}_{vv}}$. Hence, we can compute  (\ref{EQN:jones.and.west.path.weight1}) as
\begin{align*}
  \w(\Pt, \Gamma)
  &=(-1)^{|P|+1}
  \frac{|\;\Delta^{-1}_{\bar{P}\bar{P}}K_{\bar{P}\bar{P}}\Delta^{-1}_{\bar{P}\bar{P}}|}{|\Delta^{-1}K\Delta^{-1}|}
  \prod_{\{u,v\}\in \E(\Pt)}\frac{\kappa_{uv}}{\delta_{uu}\delta_{vv}}\\
  &=
  \frac{|\Delta^{-1}_{\bar{P}\bar{P}}|^{2}
  }
  {
  |\Delta^{-1}|^{2}
  }\left(\prod_{\{u,v\}\in \E(\Pt)}\frac{1}{\delta_{uu}\delta_{vv}}\right)
  \,\w(\Pt, \Sigma)\\
  &=\frac
  {
  |\Delta|^{2}
  }
  {
  |\Delta_{\bar{P}\bar{P}}|^{2}
  \prod_{\{u,v\}\in \E(\Pt)}\delta_{uu}\delta_{vv}
  }\,\w(\Pt, \Sigma)\\
  &=\frac{\prod_{v\in V} \delta^{2}_{vv}}{\left(\textstyle\prod_{v\in \bar{P}} \delta_{vv}^{2}\right) \left(\delta_{xx}\delta_{yy}\textstyle\prod_{v\in P\setminus \{x,y\}} \delta^{2}_{vv}\right)}\,\w(\Pt, \Sigma)\\
  &=\frac{\prod_{v\in V} \delta^{2}_{vv}}{\delta_{xx}\delta_{yy}\textstyle\prod_{v\in V\setminus \{x,y\}} \delta^{2}_{vv}}\,\w(\Pt,\Sigma)
  =\delta_{xx}\delta_{yy}\,\w(\Pt, \Sigma).
\end{align*}
\end{proof}
Lemma~\ref{THM:phi.almost.scale.invariant} allows one to extend the properties of covariance path weights to the weights of other measures of association. More specifically, we show below that results similar  to  (\ref{EQN:weight.decoposition}), (\ref{EQN:weight.chordless.path}) and (\ref{EQN:weight.unique.path}) also hold for both  the correlation and the inflated correlation path weights.
\begin{proposition}\label{THM:prop.w.corr}
Under the conditions of Theorem~\ref{THM:jones.west} let $\Omega$ be the correlation matrix of $X_{V}$ and let $\Pt$ be a path between $x$ and $y$ in $\G$. Then it holds that:
\begin{enumerate}
    \item[(i)] $\w(\Pt, \Omega)=\w(\Pt, \Omega_{[P|\bar{P}]})\times \frac{\IF_{P}}{\sqrt{\IF_{x}^{\bar{P}}\IF_{y}^{\bar{P}}}}$;
    \item[(ii)] if $\Pt$ is a chordless path then $\w(\Pt, \Omega_{[P|\bar{P}]})=\rho_{xy\given \bar{P}}$ so that $\w(\Pt, \Omega)=\rho_{xy\given \bar{P}}\times \frac{\IF_{P}}{\sqrt{\IF_{x}^{\bar{P}}\IF_{y}^{\bar{P}}}}$;
    \item[(iii)] if $\Pt$ is the unique path between $x$ and $y$ in $\G$ then $\w(\Pt, \Omega)=\rho_{xy}$, so that
           \begin{align*}
                \rho_{xy}=\rho_{xy\given \bar{P}}\times \frac{\IF_{P}}{\sqrt{\IF_{x}^{\bar{P}}\IF_{y}^{\bar{P}}}}.
            \end{align*}
\end{enumerate}
\end{proposition}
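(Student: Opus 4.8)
The plan is to reduce all three parts to the covariance factorization \eqref{EQN:weight.decoposition} by applying Lemma~\ref{THM:phi.almost.scale.invariant} twice: once to the full distribution and once to the conditional distribution $X_{P}|X_{\bar{P}}$.

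First I would note that $\Omega=\Delta\Sigma\Delta$ with $\Delta=\diag(\Sigma)^{-\frac{1}{2}}$, so Lemma~\ref{THM:phi.almost.scale.invariant} gives $\w(\Pt,\Omega)=(\sigma_{xx}\sigma_{yy})^{-\frac{1}{2}}\,\w(\Pt,\Sigma)$, into which I substitute \eqref{EQN:weight.decoposition} to get $\w(\Pt,\Omega)=(\sigma_{xx}\sigma_{yy})^{-\frac{1}{2}}\,\w(\Pt,\Sigma_{PP\given\bar{P}})\times\IF_{P}$. Next I apply the same lemma at the conditional level: since $\Omega_{[P|\bar{P}]}$ is the diagonal rescaling $\diag(\Sigma_{PP\given\bar{P}})^{-\frac{1}{2}}\,\Sigma_{PP\given\bar{P}}\,\diag(\Sigma_{PP\given\bar{P}})^{-\frac{1}{2}}$, and its inverse shares the off-diagonal zero pattern of $K_{PP}=\Sigma_{PP\given\bar{P}}^{-1}$ and is therefore adapted to $\G_{P}$, Lemma~\ref{THM:phi.almost.scale.invariant} applied to $X_{P}|X_{\bar{P}}$ yields $\w(\Pt,\Omega_{[P|\bar{P}]})=(\sigma_{xx\given\bar{P}}\sigma_{yy\given\bar{P}})^{-\frac{1}{2}}\,\w(\Pt,\Sigma_{PP\given\bar{P}})$. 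Solving this for $\w(\Pt,\Sigma_{PP\given\bar{P}})$ and inserting it above expresses $\w(\Pt,\Omega)$ as $\w(\Pt,\Omega_{[P|\bar{P}]})$ times $\IF_{P}$ and the variance ratio $\sqrt{\sigma_{xx\given\bar{P}}\sigma_{yy\given\bar{P}}/(\sigma_{xx}\sigma_{yy})}$.

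To finish part~(i) I would identify this variance ratio as $1/\sqrt{\IF_{x}^{\bar{P}}\IF_{y}^{\bar{P}}}$: applying Lemma~\ref{THM:computation.of.IF} with the singleton $A=\{x\}$ and $B=\bar{P}$ gives $\IF_{x}^{\bar{P}}=\sigma_{xx}/\sigma_{xx\given\bar{P}}$, and analogously for $y$, which is exactly the required reciprocal. For part~(ii), chordlessness means $\PT_{xy}^{P}=\{\Pt\}$, so by \eqref{EQN:par.cov.sum} one has $\w(\Pt,\Sigma_{PP\given\bar{P}})=\sigma_{xy\given\bar{P}}$; the conditional scaling identity from the second step then turns this into $\w(\Pt,\Omega_{[P|\bar{P}]})=\sigma_{xy\given\bar{P}}/\sqrt{\sigma_{xx\given\bar{P}}\sigma_{yy\given\bar{P}}}=\rho_{xy\given\bar{P}}$, and substituting into~(i) gives the displayed formula. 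For part~(iii), a unique path in $\G$ is in particular chordless, and $\PT_{xy}=\{\Pt\}$ together with \eqref{EQN:in.thm.jones.west} applied to $\Sigma$ gives $\w(\Pt,\Sigma)=\sigma_{xy}$; one more application of Lemma~\ref{THM:phi.almost.scale.invariant} then yields $\w(\Pt,\Omega)=(\sigma_{xx}\sigma_{yy})^{-\frac{1}{2}}\sigma_{xy}=\rho_{xy}$, after which part~(ii) supplies the factorization.

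The computations are all routine; the only step that genuinely needs care is the dual use of Lemma~\ref{THM:phi.almost.scale.invariant}, and in particular verifying that its hypotheses transfer to the conditional distribution $X_{P}|X_{\bar{P}}$, i.e.\ that the inverse of $\Omega_{[P|\bar{P}]}$ is adapted to $\G_{P}$, so that the marginal scaling identity may legitimately be reused at the conditional level.
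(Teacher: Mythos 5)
Your proposal is correct and follows essentially the same route as the paper: Lemma~\ref{THM:phi.almost.scale.invariant} applied at the marginal and conditional levels, the factorization \eqref{EQN:weight.decoposition} in between, and Lemma~\ref{THM:computation.of.IF} with singleton $A$ to identify the variance ratio as $1/\sqrt{\IF_{x}^{\bar{P}}\IF_{y}^{\bar{P}}}$, with parts~(ii) and~(iii) reduced to~(i) exactly as in the paper's argument. Your explicit check that $\Omega_{[P|\bar{P}]}^{-1}$ is adapted to $\G_{P}$ is a point the paper leaves implicit, and it is a worthwhile addition.
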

\begin{proof}
Equation (i) can be shown as follows,
\begin{align}
\w(\Pt, \Omega) &= (\sigma_{xx}\sigma_{yy})^{-\frac{1}{2}}\w(\Pt, \Sigma)\label{EQN.cor.w.omega.01}\\
                &= (\sigma_{xx}\sigma_{yy})^{-\frac{1}{2}}\w(\Pt, \Sigma_{PP\given P})\IF_{P}\label{EQN.cor.w.omega.02}\\
                &= \left(\frac{\sigma_{xx\given\bar{P}}\sigma_{yy\given\bar{P}}}{\sigma_{xx}\sigma_{yy}}\right)^{\frac{1}{2}}
                   \w(\Pt, \Omega_{[P|\bar{P}]})\IF_{P}\label{EQN.cor.w.omega.03}\\
                &= \w(\Pt, \Omega_{[P|\bar{P}]})\times \frac{\IF_{P}}{\sqrt{\IF_{x}^{\bar{P}}
                   \IF_{y}^{\bar{P}}}},\label{EQN.cor.w.omega.04}
\end{align}
where (\ref{EQN.cor.w.omega.01}) and (\ref{EQN.cor.w.omega.03}) follow from Lemma~\ref{THM:phi.almost.scale.invariant} because  $\w(\Pt, \Omega)=(\sigma_{xx}\sigma_{yy})^{-1/2}\w(\Pt, \Sigma)$ and $\w(\Pt, \Omega_{[P|\bar{P}]})=(\sigma_{xx\given\bar{P}}\sigma_{yy\given\bar{P}})^{-1/2}\w(\Pt, \Sigma_{PP\given \bar{P}})$. Furthermore, (\ref{EQN.cor.w.omega.02}) is given by (\ref{EQN:weight.decoposition}), and the inflation factors in (\ref{EQN.cor.w.omega.04}) are computed from Lemma~\ref{THM:computation.of.IF}.

The identity (ii) follow from (i) because in a chordless path
$\w(\Pt, \Sigma_{PP\given \bar{P}})=\sigma_{xy\given\bar{P}}$ by (\ref{EQN:weight.chordless.path}), so that
$\w(\Pt, \Omega_{[P|\bar{P}]})
=(\sigma_{xx\given\bar{P}}\sigma_{yy\given\bar{P}})^{-1/2}\sigma_{xy\given\bar{P}}=\rho_{xy\given\bar{P}}$.
Finally, (iii) follows from (ii) because if $\Pt$ is the only path between $x$ and $y$ in $\G$, then $\w(\Pt, \Sigma)=\sigma_{xy}$ so that $\w(\Pt, \Omega)=(\sigma_{xx}\sigma_{yy})^{-1/2}\sigma_{xy}=\rho_{xy}$.
\end{proof}

The equality (iii) of Proposition~\ref{THM:prop.w.corr} was first given in
\citet{pmlr-v72-roverato18a}, where this factorization of the correlation coefficient was exploited to compare Gaussian tree models. They also noticed that this equality implies that $\mathrm{sgn}(\rho_{xy})=\mathrm{sgn}(\rho_{xy\given\bar{P}})$ and that $|\rho_{xy}|\geq |\rho _{xy\given \bar{P}}|$. Furthermore, as shown by \citet{choi2011learning} and \citet{zwiernik2015semialgebraic}, among others, when $\G$ is a tree, then the correlation $\rho_{xy}$ can be factorized over the edges of the unique path between $x$ and $y$ as $\rho_{xy}=\prod_{\{u,v\}\in \E(\Pt)} \rho_{uv}$. The latter implies that the partial correlation in both (ii) and (iii) can be factorized as $\rho_{xy\given \bar{P}}=\prod_{\{u,v\}\in \E(\Pt)} \rho_{uv\given \bar{P}}$, because $\G_{P}$ is a tree.

We now turn to the decomposition of inflated correlations and in this case we also give an explicit expression of the corresponding weights, which are easier to interpret with respect to the other types of weights considered so far.
\begin{theorem}\label{THM:prop.w.infcorr} Under the conditions of Theorem~\ref{THM:jones.west}, let $\infCORR{V}$ be the inflated correlation matrix of $X_{V}$ and let $\Pt$ be a path between $x$ and $y$ in $\G$. Then it holds that,
\begin{align}\label{EQN:weight.infomega.explicit}
\w(\Pt, \infCORR{V})= |\infCORR{V}_{PP}| \prod_{\{u.v\}\in \E(\Pt)}\rho_{uv\given V\setminus\{u,v\}};
\end{align}
and furthermore,
\begin{enumerate}
    \item[(i)] $\w(\Pt, \infCORR{V})=\w(\Pt, \infCORR{P}_{PP\given\bar{P}})\times \IF_{P}$ where
        \begin{align}\label{EQN:weight.partial.infomega.explicit}
          \w(\Pt, \infCORR{P}_{PP\given\bar{P}})
            =|\infCORR{P}_{PP\given\bar{P}}|
            \prod_{\{u.v\}\in \E(\Pt)}\rho_{uv\given V\setminus\{u,v\}};
        \end{align}
    \item[(ii)] if $\Pt$ is a chordless path $\w(\Pt, \infCORR{P}_{PP\given\bar{P}})=\varrho^{P}_{xy\given \bar{P}}$, so that $\w(\Pt, \infCORR{V})=\varrho^{P}_{xy\given \bar{P}}\times \IF_{P}$;

    \item[(iii)] if $\Pt$ is the unique path between $x$ and $y$ in $\G$, then $\w(\Pt, \infCORR{V})=\varrho^{V}_{xy}$ so that $\varrho^{V}_{xy}=\varrho^{P}_{xy\given \bar{P}}\times \IF_{P}$.
\end{enumerate}
\end{theorem}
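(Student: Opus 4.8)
The overall strategy is to treat this exactly as the inflated-correlation analogue of Proposition~\ref{THM:prop.w.corr}, exploiting the fact that $\infCORR{V}=\Delta\Sigma\Delta$ with $\Delta=\diag(K)^{\frac{1}{2}}$, so that Lemma~\ref{THM:phi.almost.scale.invariant} applies with $\delta_{vv}=\sqrt{\kappa_{vv}}$. The proof will proceed in two stages: first establish the explicit formula~(\ref{EQN:weight.infomega.explicit}) for the inflated-correlation weight, and then derive the three factorization statements~(i)--(iii) from it together with the results already proved in Section~\ref{SEC:decomposition.weight}.

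First I would prove the explicit formula. By Lemma~\ref{THM:phi.almost.scale.invariant}, $\w(\Pt,\infCORR{V})=\sqrt{\kappa_{xx}\kappa_{yy}}\,\w(\Pt,\Sigma)$. Substituting the Jones--West formula~(\ref{EQN:jones.and.west.path.weight1}) and rewriting $|K_{\bar P\bar P}|/|K|=|\Sigma_{PP}|$ gives $\w(\Pt,\infCORR{V})=(-1)^{|P|+1}\sqrt{\kappa_{xx}\kappa_{yy}}\,|\Sigma_{PP}|\prod_{\{u,v\}\in\E(\Pt)}\kappa_{uv}$. The task is to reorganize the scalar factors into the claimed form. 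The key bookkeeping observation is that the matrix $\infCORR{V}_{PP}$ equals $\diag(K)_{PP}^{1/2}\Sigma_{PP}\diag(K)_{PP}^{1/2}$, so $|\infCORR{V}_{PP}|=|\Sigma_{PP}|\prod_{v\in P}\kappa_{vv}$. Meanwhile, each factor $\kappa_{uv}$ should be converted to a partial correlation via $\rho_{uv\given V\setminus\{u,v\}}=-\kappa_{uv}/\sqrt{\kappa_{uu}\kappa_{vv}}$ (the identity recorded in Section~\ref{SEC:inflated.correlation}), which introduces a factor $-\sqrt{\kappa_{uu}\kappa_{vv}}$ per edge. Multiplying these edge factors over the $|P|-1$ edges of the path produces $(-1)^{|P|-1}$ times a product of square roots of diagonal concentrations; because each interior vertex of the path appears in exactly two edges and the endpoints $x,y$ in exactly one, this product telescopes to $\kappa_{xx}^{1/2}\kappa_{yy}^{1/2}\prod_{v\in P\setminus\{x,y\}}\kappa_{vv}=(\kappa_{xx}\kappa_{yy})^{1/2}\prod_{v\in P}\kappa_{vv}/\sqrt{\kappa_{xx}\kappa_{yy}}$. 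Matching all the diagonal-concentration powers and the sign $(-1)^{|P|+1}(-1)^{|P|-1}=1$ against $|\infCORR{V}_{PP}|$ then yields~(\ref{EQN:weight.infomega.explicit}). I expect this telescoping/sign-tracking step to be the main obstacle, since it is the one place where the combinatorial structure of the path interacts with the determinant normalization, and it is easy to miscount which vertices carry a full power versus a square root.

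For the factorizations, statement~(i) follows by applying exactly the same explicit-formula argument to the distribution $X_{P}\mid X_{\bar P}$, using that $\infCORR{P}_{PP\given\bar P}=(I-R)_{PP}^{-1}$ is the inflated correlation matrix built from $K_{PP}$, whose off-diagonal partial correlations are again $\rho_{uv\given V\setminus\{u,v\}}$ (these are unchanged because $K_{PP}$ is a submatrix of $K$). This gives~(\ref{EQN:weight.partial.infomega.explicit}), and then the relation $\w(\Pt,\infCORR{V})=\w(\Pt,\infCORR{P}_{PP\given\bar P})\times\IF_{P}$ is immediate upon comparing the two explicit formulas, since the edge-products coincide and the determinant ratio $|\infCORR{V}_{PP}|/|\infCORR{P}_{PP\given\bar P}|$ equals $|\Sigma_{PP}|\prod_{v\in P}\kappa_{vv}\big/\big(|\Sigma_{PP\given\bar P}|\prod_{v\in P}\kappa_{vv}\big)=|\Sigma_{PP}|/|\Sigma_{PP\given\bar P}|=\IF_{P}^{\bar P}$ by Lemma~\ref{THM:computation.of.IF}. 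Statement~(ii) specializes~(i) to a chordless path: then $\PT_{xy}^{P}=\{\Pt\}$, so the sum~(\ref{EQN:par.cov.sum}) collapses and $\w(\Pt,\infCORR{P}_{PP\given\bar P})$ must equal the single off-diagonal entry $\varrho^{P}_{xy\given\bar P}$ of $\infCORR{P}_{PP\given\bar P}$; alternatively one reads this directly off Lemma~\ref{THM:interpretation.entries.Rinv} applied in the conditional model. Statement~(iii) is then the further specialization in which $\PT_{xy}=\{\Pt\}$: uniqueness of the global path forces $\Pt$ to be chordless, so~(ii) applies, and the covariance decomposition~(\ref{EQN:in.thm.jones.west}) collapses to $\w(\Pt,\infCORR{V})=\varrho^{V}_{xy}$, giving the stated identity $\varrho^{V}_{xy}=\varrho^{P}_{xy\given\bar P}\times\IF_{P}$. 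Throughout, statements~(ii) and~(iii) require no new computation beyond recognizing the collapsing sums and citing the appropriate earlier results, so the entire difficulty is concentrated in the explicit-formula derivation at the start.
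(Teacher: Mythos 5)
Your proposal is correct and follows essentially the same route as the paper: the explicit formula is the Jones--West weight evaluated on $(\infCORR{V})^{-1}=I-R$ (you reach it equivalently by rescaling the $\Sigma$-weight via Lemma~\ref{THM:phi.almost.scale.invariant} and telescoping the $\sqrt{\kappa_{uu}\kappa_{vv}}$ factors over the edges), and (i)--(iii) follow by the same specializations using Lemma~\ref{THM:phi.almost.scale.invariant}, Lemma~\ref{THM:computation.of.IF} and the covariance-weight results of Section~\ref{SEC:decomposition.weight}. The only blemish is a harmless slip in your telescoped product, which should read $\prod_{v\in P}\kappa_{vv}/\sqrt{\kappa_{xx}\kappa_{yy}}$ without the extra leading factor $(\kappa_{xx}\kappa_{yy})^{1/2}$; the final matching against $|\infCORR{V}_{PP}|$ is nevertheless right.
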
\begin{proof}
Equations (\ref{EQN:weight.infomega.explicit}) and (\ref{EQN:weight.partial.infomega.explicit}) follow from the direct computation of $\w(\Pt, \infCORR{V})$ and $\w(\Pt, \infCORR{V}_{PP\given\bar{P}})$, respectively, by recalling that $(\infCORR{V})^{-1}=I-R$ and $(\infCORR{P}_{PP\given\bar{P}})^{-1}=I_{PP}-R_{PP}$.

Equation (i) can be shown as follows,
\begin{align}
\w(\Pt, \infCORR{V}) &= (\kappa_{xx}\kappa_{yy})^{\frac{1}{2}}\w(\Pt, \Sigma)\label{EQN.cor.w.infomega.01}\\
                &= (\kappa_{xx}\kappa_{yy})^{\frac{1}{2}}\w(\Pt, \Sigma_{PP\given P})\IF_{P}\label{EQN.cor.w.infomega.02}\\
                &=  \w(\Pt, \infCORR{P}_{PP\given\bar{P}})\IF_{P}\label{EQN.cor.w.infomega.03}
\end{align}
where (\ref{EQN.cor.w.infomega.01}) and (\ref{EQN.cor.w.infomega.03}) follow from Lemma~\ref{THM:phi.almost.scale.invariant} because
$\Sigma_{PP\given \bar{P}}^{-1}=K_{PP}$ so that $\w(\Pt, \infCORR{V})=(\kappa_{xx}\kappa_{yy})^{\frac{1}{2}}\w(\Pt, \Sigma)$ and $\w(\Pt, \infCORR{P}_{PP\given\bar{P}})=(\kappa_{xx}\kappa_{yy})^{\frac{1}{2}}\w(\Pt, \Sigma_{PP\given \bar{P}})$ and (\ref{EQN.cor.w.infomega.02}) is given by (\ref{EQN:weight.decoposition}).
The identity (ii) follow from (i) because in a chordless path
$\w(\Pt, \Sigma_{PP\given \bar{P}})=\sigma_{xy\given\bar{P}}$ by (\ref{EQN:weight.chordless.path}), so that
$\w(\Pt, \infCORR{P}_{PP\given\bar{P}})=(\kappa_{xx}\kappa_{yy})^{\frac{1}{2}}\sigma_{xy\given\bar{P}}=\varrho_{xy\given\bar{P}}$.
Finally, (iii) follows from (ii) because if $\Pt$ is the only path between $x$ and $y$ in $\G$, then $\w(\Pt, \Sigma)=\sigma_{xy}$ so that $\w(\Pt, \infCORR{V})=(\kappa_{xx}\kappa_{yy})^{\frac{1}{2}}\sigma_{xy}=\varrho_{xy}$.
\end{proof}
Both correlations and inflated correlations can be regarded as normalized versions of covariances, and  Proposition~\ref{THM:prop.w.corr} and Theorem~\ref{THM:prop.w.infcorr} show that the behaviour of weights obtained from inflated correlations is more consistent with that of weights obtained from the decomposition of covariances. For example, the weights of covariances and inflated correlations involve the same type of inflation factor. Furthermore, the computation of (i) in Theorem~\ref{THM:prop.w.infcorr} from $\w(\Pt, \Sigma)$ involves two steps: the normalization of $\w(\Pt, \Sigma)$ to obtain $\w(\Pt, \infCORR{V})$ and then the factorization of the latter. The order in which these two steps are executed can be reversed, in the sense that one obtains the same result by first factorizing $\w(\Pt, \Sigma)$ and then normalizing $\w(\Pt, \Sigma_{PP\given\bar{P}})$. The same is not true for the weights computed from correlations.

We have focused on three different types of weights obtained from the decomposition of covariances, correlations and inflated correlations, respectively. Among these, we find that inflated correlations are of special interest because their value can be clearly interpreted. Consider the weights in (\ref{EQN:weight.infomega.explicit}) and (\ref{EQN:weight.partial.infomega.explicit}). These two quantities are computed  as the product of the partial correlations relative to the edges of the path, $\prod_{\{u.v\}\in \E(\Pt)}\rho_{uv\given V\setminus\{u,v\}}$,  multiplied by $|\infCORR{V}_{PP}|$ and $|\infCORR{P}_{PP\given\bar{P}}|$, respectively. The product of the partial correlations associated with the edges of the paths is perhaps the most intuitive measure to be associated with a path; however, we note that every partial correlation $\rho_{uv\given V\setminus \{u,v\}}$ is computed with respect to the distribution of $X_{\{u,v\}}$ adjusted for all the remaining variables in $X_{V}$. As a consequence, this product of partial correlations can be regarded as a ``maximally adjusted'' weight that needs to be embedded in the distribution where the path is considered. This is obtained by multiplying the product of partial correlations by the proper inflation factor, that is $|\infCORR{P}_{PP\given\bar{P}}|$ for $X_{P}|X_{\bar{P}}$ and $|\infCORR{V}_{PP}|$ for $X_{V}$; recall that, unlike $\Omega_{PP}$ that is computed on the marginal distribution of $X_{P}$, the computation of $\infCORR{V}_{PP}$ requires the joint distribution of $X_{V}$. We have discussed the interpretation of the determinants of inflated correlations in Section~\ref{SEC:inflation.factors}; here we remark that $|\infCORR{V}_{PP}|\geq |\infCORR{P}_{PP\given\bar{P}}|\geq 1$ and that from Theorem~\ref{THM:prop.w.infcorr} it follows that $\w(\Pt, \infCORR{V})/\w(\Pt, \infCORR{P}_{PP\given\bar{P}})|=|\infCORR{V}_{PP}|/|\infCORR{P}_{PP\given\bar{P}}|=\IF_{P}$. Finally, this interpretation of inflated correlation weights can be readily extended to interpret the meaning of the covariance weights, because by Lemma~\ref{THM:phi.almost.scale.invariant} it holds that $\w(\Pt, \Sigma)=\sqrt{\sigma_{xx\given V\setminus \{x\}}\sigma_{xx\given V\setminus \{x\}}}\times \w(\Pt, \infCORR{V})$.

\subsection{Single edge paths}
In the representation of a concentration graph it is of interest to identify a measure that may suitably encode the strength of the association represented by an edge of the graph. Commonly, this is done by associating the partial covariance $\sigma_{xy\given V\setminus \{x,y\}}$ with the edge $\{x,y\}$, which can then be normalized to obtain the partial correlation $\rho_{xy\given V\setminus \{x,y\}}$. On the other hand, \citet{roverato2017networked} observed that every edge of the graph is itself a path between its endpoints and therefore the corresponding covariance weights can provide an alternative measure of association for the edge. More formally, every edge is a chordless path and therefore, by (\ref{EQN:weight.chordless.path}),  the covariance weight of $\lp x, y\rp$ is given by $\w(\lp x, y\rp, \Sigma) = \sigma_{xy\given V\setminus\{x,y\}}\times \IF_{\{x,y\}}$. \citet{roverato2017networked} called this quantity a networked partial covariance and then they normalized it to obtain a networked partial correlation $\rho_{xy\given V\setminus\{x,y\}}\times \IF_{\{x,y\}}$. They also showed that networked partial correlations explain a larger fraction of variability of quantitative genetic interaction profiles in yeast than do marginal or partial correlations. Nevertheless, from Proposition~\ref{THM:prop.w.corr} if follows that $\rho_{xy\given V\setminus\{x,y\}}\times \IF_{\{x,y\}}\neq \w(\lp x, y\rp, \Omega)$ and more generally, to the best of our knowledge, networked partial correlations are not weights obtained from the decomposition of an association measure. On the other hand, Theorem~\ref{THM:prop.w.infcorr} suggests that inflated partial correlations represent an alternative way to normalize  networked partial covariances that is consistent with the path weight interpretation because it holds that,
\begin{align}\label{EQN:Phi.edges}
  \w(\lp x, y\rp, \infCORR{V})
  =\varrho^{\{x,y\}}_{xy\given V\setminus\{x,y\}}\times \IF_{\{x,y\}}.
\end{align}
We call this quantity a networked inflated partial correlation and remark that  networked partial correlations and networked inflated partial correlations are closely related because, by Lemma~\ref{THM:interpretation.entries.Rinv},
the inflated partial correlation in (\ref{EQN:Phi.edges}) is a simple one-to-one transformation that scales $\rho_{xy\given V\setminus\{x,y\}}$ to take values in $I\!\! R$,
\begin{align*}
\varrho^{\{x,y\}}_{xy\given V\setminus\{x,y\}}
=\frac{\rho_{xy\given V\setminus\{x,y\}}}
{1-\rho_{xy\given V\setminus\{x,y\}}^{2}}.
\end{align*}

\subsection{Upper and lower bounds}\label{SEC:normalized.weights}
Path weights, even when they are normalized quantities, do not take values in the interval $[-1, 1]$ and it is therefore of interest to compute their lower and upper bounds. To this aim it is useful to define the following function of $R$,
\begin{align*}
\phi(\Pt, R)=|(I-R)_{\bar{P}\bar{P}}| \prod_{\{u,v\}\in \E(\Pt)}\rho_{uv\given V\setminus\{u,v\}},
\end{align*}
where $P=V(\Pt)$ and $\bar{P}=V\setminus P$; note that $\phi(\Pt, R)$ takes values
between $-1$ and $+1$.
\begin{proposition}\label{THM:normalized.weight}
Under the conditions of Theorem~\ref{THM:jones.west} let $\Gamma=\Delta\Sigma\Delta$ where $\Delta=\{\delta_{uv}\}_{u,v\in V}$ is a diagonal matrix with $\delta_{vv}\neq 0$ for all $v\in V$. Then for any path $\Pt$  between $x$ and $y$ in $\G$ it holds that,
\begin{align}\label{EQN:w.upper.lower.bounds}
|\w(\Pt, \Gamma)|\leq |\infCORR{V}| \sqrt{\delta_{xx}^{2}\,\sigma_{xx\given V\setminus\{x\}}\,\delta_{yy}^{2}\,\sigma_{yy\given V\setminus\{y\}}},
\end{align}
and, furthermore,
\begin{align*}
\frac{\w(\Pt, \Gamma)}{|\infCORR{V}| \sqrt{\delta_{xx}^{2}\,\sigma_{xx\given V\setminus\{x\}}\,\delta_{yy}^{2}\,\sigma_{yy\given V\setminus\{y\}}}}
= \phi(\Pt, R).
\end{align*}
\end{proposition}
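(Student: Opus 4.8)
The plan is to express $\w(\Pt,\Gamma)$ entirely in terms of the inflated correlation weight $\w(\Pt,\infCORR{V})$, for which Theorem~\ref{THM:prop.w.infcorr} already supplies the explicit formula~(\ref{EQN:weight.infomega.explicit}), and then to recognise $\phi(\Pt,R)$ sitting inside that formula. The whole argument is a short chain of substitutions resting on two facts proved earlier: Lemma~\ref{THM:phi.almost.scale.invariant}, which controls how a weight behaves under diagonal rescaling, and the determinant identity $|\Sigma_{PP}|=|K_{\bar{P}\bar{P}}|/|K|$ used in the proof of Theorem~\ref{THM:two.components.of.path.weights}.

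First I would establish the key link $\w(\Pt,\infCORR{V})=|\infCORR{V}|\,\phi(\Pt,R)$. Starting from~(\ref{EQN:weight.infomega.explicit}), the only factor that needs rewriting is $|\infCORR{V}_{PP}|$. Since $\infCORR{V}=(I-R)^{-1}$, applying the determinant identity above to the pair $(I-R,\infCORR{V})$ in place of $(K,\Sigma)$ gives $|\infCORR{V}_{PP}|=|(I-R)_{\bar{P}\bar{P}}|/|I-R|$, and using $|I-R|=|\infCORR{V}|^{-1}$ this equals $|\infCORR{V}|\,|(I-R)_{\bar{P}\bar{P}}|$. Substituting back into~(\ref{EQN:weight.infomega.explicit}) and comparing with the definition of $\phi(\Pt,R)$ yields the claimed identity; in particular $\w(\Pt,\infCORR{V})$ and $\phi(\Pt,R)$ share the same sign because $|\infCORR{V}|\geq 1$.

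Next I would bring in the general scaling. Both $\Gamma=\Delta\Sigma\Delta$ and $\infCORR{V}=\diag(K)^{\frac12}\Sigma\diag(K)^{\frac12}$ are of the form covered by Lemma~\ref{THM:phi.almost.scale.invariant}, so $\w(\Pt,\Gamma)=\delta_{xx}\delta_{yy}\,\w(\Pt,\Sigma)$ and $\w(\Pt,\infCORR{V})=(\kappa_{xx}\kappa_{yy})^{\frac12}\w(\Pt,\Sigma)$. Eliminating the common factor $\w(\Pt,\Sigma)$ and using $\kappa_{vv}=1/\sigma_{vv\given V\setminus\{v\}}$ gives
\[
\w(\Pt,\Gamma)=\delta_{xx}\delta_{yy}\sqrt{\sigma_{xx\given V\setminus\{x\}}\,\sigma_{yy\given V\setminus\{y\}}}\;|\infCORR{V}|\,\phi(\Pt,R).
\]
Dividing both sides by $|\infCORR{V}|\sqrt{\delta_{xx}^2\,\sigma_{xx\given V\setminus\{x\}}\,\delta_{yy}^2\,\sigma_{yy\given V\setminus\{y\}}}$ collapses everything except $\phi(\Pt,R)$, which proves the equality; the bound~(\ref{EQN:w.upper.lower.bounds}) then follows at once by taking absolute values and invoking $|\phi(\Pt,R)|\le 1$.

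The computation is routine once the identity of the second paragraph is in place, so the only point requiring real care is the sign. The denominator contains $\sqrt{\delta_{xx}^2\delta_{yy}^2}=|\delta_{xx}\delta_{yy}|$, whereas the numerator carries $\delta_{xx}\delta_{yy}$, so the ratio is strictly $\mathrm{sgn}(\delta_{xx}\delta_{yy})\,\phi(\Pt,R)$. This agrees with the stated $\phi(\Pt,R)$ in precisely the two cases that motivate the result -- the correlation normalisation $\Delta=\diag(\Sigma)^{-\frac12}$ and the inflated correlation normalisation $\Delta=\diag(K)^{\frac12}$ -- since there $\Delta$ has strictly positive diagonal; I would therefore either restrict attention to $\delta_{xx}\delta_{yy}>0$ or record the sign factor $\mathrm{sgn}(\delta_{xx}\delta_{yy})$ explicitly.
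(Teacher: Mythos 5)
Your proposal is correct and follows essentially the same route as the paper: establish $\w(\Pt,\infCORR{V})=|\infCORR{V}|\,\phi(\Pt,R)$ via the determinant identity $|\infCORR{V}_{PP}|=|(I-R)_{\bar{P}\bar{P}}|/|I-R|$, then relate $\w(\Pt,\Gamma)$ to $\w(\Pt,\infCORR{V})$ through Lemma~\ref{THM:phi.almost.scale.invariant} and $\kappa_{vv}=1/\sigma_{vv\given V\setminus\{v\}}$. Your closing remark about the sign factor $\mathrm{sgn}(\delta_{xx}\delta_{yy})$ is a legitimate refinement that the paper's own proof glosses over (it silently replaces $\delta_{xx}\delta_{yy}$ by $\sqrt{\delta_{xx}^{2}\delta_{yy}^{2}}$), though it is immaterial for the inequality and for the two scalings of actual interest.
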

\begin{proof}
We first notice that, because $|\infCORR{V}_{PP}|=|(I-R)_{\bar{P} \bar{P}}|/|I-R|$,
\begin{align*}
\w(\Pt, \infCORR{V})
= |\infCORR{V}_{PP}|\;\prod_{\{u,v\}\in \E(\Pt)}
        \rho_{uv\given V\setminus \{u,v\}}
=  \frac{\phi(\Pt, R)}{|I-R|}
= |\infCORR{V}|\; \phi(\Pt, R).
\end{align*}
Hence, we can apply Lemma~\ref{THM:phi.almost.scale.invariant} to obtain
\begin{align*}
\w(\Pt, \Gamma)=
\delta_{xx}\delta_{yy}\w(\Pt, \Sigma)
=\frac{\delta_{xx}\delta_{yy}\w(\Pt, \infCORR{V})}{\sqrt{\kappa_{xx}\,\kappa_{yy}}}
=\sqrt{\delta_{xx}^{2}\,\sigma_{xx\given V\setminus\{x\}}\,\delta_{yy}^{2}\,\sigma_{yy\given V\setminus\{y\}}}\;|\infCORR{V}| \phi(\Pt, R),
\end{align*}
as required. The upper and lower bounds in (\ref{EQN:w.upper.lower.bounds}) are an immediate consequence of the fact that $-1\leq \phi(\Pt, R)\leq 1$.
\end{proof}
The bounds given in (\ref{EQN:w.upper.lower.bounds}) cannot be improved  because they can actually be attained. This happens when $\phi(\Pt, R)=\pm{}1$, that is, when both $|(I-R)_{\bar{P}\bar{P}}|=1$ and $\rho_{uv\given V\setminus\{u,v\}}=\pm 1$ for all $\{u,v\}\in \E(\Pt)$. Clearly, it holds that $|(I-R)_{\bar{P}\bar{P}}|=1$ if and only if $R_{\bar{P}\bar{P}}=0$, i.e., the entries of $X_{\bar{P}}|X_{P}$ are mutually independent so that $\G_{\bar{P}}$ has no edges.

As far as the comparison of the weights of two paths is concerned, Proposition~\ref{THM:normalized.weight} shows that the weights of the paths joining two vertices have the same upper and lower bounds, i.e., they have the same range of possible values. On the other hand, two paths with different endpoints do not have, in general, the same upper and lower bounds. A remarkable exception is given by inflated correlations, for which all the weights have the same upper and lower bounds, regardless of the path endpoints.
\begin{corollary}\label{THM:infcor.upper.lower}
Under the conditions of Theorem~\ref{THM:jones.west} let $\infCORR{V}$ be the inflated correlation matrix of $X_{V}$ . Then for any path $\Pt$ between $x$ and $y$ in $\G$ it holds that,
\begin{align*}
-|\infCORR{V}|\leq \w(\Pt, \infCORR{V})\leq |\infCORR{V}|,\qquad \frac{\w(\Pt, \infCORR{V})}{|\infCORR{V}|}=\phi(\Pt, R).
\end{align*}
\end{corollary}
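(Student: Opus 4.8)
The plan is to recognise Corollary~\ref{THM:infcor.upper.lower} as nothing more than the special case of Proposition~\ref{THM:normalized.weight} in which the diagonal scaling is chosen so that $\Gamma$ is the inflated correlation matrix. Concretely, I would take $\Delta=\diag(K)^{\frac{1}{2}}$, so that $\delta_{vv}=\sqrt{\kappa_{vv}}$ for every $v\in V$. By the defining identity (\ref{EQN:def.Rinv}) this gives exactly $\Gamma=\Delta\Sigma\Delta=\infCORR{V}$, and therefore $\w(\Pt,\Gamma)=\w(\Pt,\infCORR{V})$. The whole corollary should then drop out of the proposition once the scaling-dependent normaliser is evaluated for this particular $\Delta$.

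The key observation is that with this choice the awkward square-root factor appearing in Proposition~\ref{THM:normalized.weight} collapses to $1$. Indeed, since $\kappa_{vv}=1/\sigma_{vv\given V\setminus\{v\}}$ (as recalled just after Lemma~\ref{THM:computation.of.IF}), I would compute $\delta_{xx}^{2}\,\sigma_{xx\given V\setminus\{x\}}=\kappa_{xx}\,\sigma_{xx\given V\setminus\{x\}}=1$ and likewise $\delta_{yy}^{2}\,\sigma_{yy\given V\setminus\{y\}}=1$, so that $\sqrt{\delta_{xx}^{2}\,\sigma_{xx\given V\setminus\{x\}}\,\delta_{yy}^{2}\,\sigma_{yy\given V\setminus\{y\}}}=1$. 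Substituting $\Gamma=\infCORR{V}$ into the two displayed statements of Proposition~\ref{THM:normalized.weight} then reduces them immediately to $|\w(\Pt,\infCORR{V})|\leq|\infCORR{V}|$ and $\w(\Pt,\infCORR{V})/|\infCORR{V}|=\phi(\Pt,R)$.

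Finally I would convert the absolute-value bound into the claimed two-sided bound. Because $|\infCORR{V}|\geq 1>0$ by Lemma~\ref{THM:interpretation.entries.Rinv}, dividing the identity through by $|\infCORR{V}|$ and invoking $-1\leq\phi(\Pt,R)\leq 1$ (noted where $\phi(\Pt,R)$ is defined) yields $-|\infCORR{V}|\leq\w(\Pt,\infCORR{V})\leq|\infCORR{V}|$, completing the argument. I do not expect any genuine obstacle here: the entire content lies in identifying the correct scaling $\Delta=\diag(K)^{\frac{1}{2}}$ and in noticing that it is precisely this scaling that makes the endpoint-dependent normaliser equal to $1$. This collapse is exactly the feature that distinguishes inflated correlation weights, whose bounds $\pm|\infCORR{V}|$ are uniform across all paths, from covariance or correlation weights, whose bounds genuinely depend on the endpoints $x$ and $y$.
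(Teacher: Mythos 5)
Your proposal is correct and follows exactly the paper's own argument: the corollary is obtained by specialising Proposition~\ref{THM:normalized.weight} to $\Delta=\diag(K)^{\frac{1}{2}}$, whereupon $\delta_{xx}^{2}\,\sigma_{xx\given V\setminus\{x\}}=\kappa_{xx}\,\sigma_{xx\given V\setminus\{x\}}=1$ (and likewise for $y$) collapses the endpoint-dependent normaliser to $1$. The paper's proof is a one-line version of precisely this observation, so there is nothing to add.
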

\begin{proof}
This is an immediate consequence of Proposition~\ref{THM:normalized.weight} because in this case $\delta^{2}_{xx}=\kappa_{xx}=1/\sigma_{xx\given V\setminus\{x\}}$, and similarly for $\delta^{2}_{yy}$.
\end{proof}
We close this section by observing that the quantity  $\phi(\Pt, R)$ can be computed as the product of $\prod_{\{u,v\}\in \E(\Pt)}\rho_{uv\given V\setminus\{u,v\}}$, which is a function of $R_{PP}$, and of the determinant of $(I-R)_{\bar{P}\bar{P}}$. These two submatrices are variation independent in the sense that for any pair of positive-definite matrices with unit diagonal $(I-R)_{PP}$ and $(I-R)_{\bar{P}\bar{P}}$, there exists a positive-definite matrix $(I-R)$ with $(I-R)_{PP}$ and $(I-R)_{\bar{P}\bar{P}}$ as submatrices. This feature may be exploited in the estimation of $\phi(\Pt, R)$ from data, for instance when one is interested in the comparison of paths through  the ratio of their weights.

\section{Application to the analysis of dietary intake patterns}\label{SEC:application}
\citet{iqbal2016} introduced the use of dietary intake networks in the analysis of dietary patterns. Every vertex of the network represents the consumption of a given food group, and the edge structure shows how foods are consumed in relation to each other. More concretely, \citet{iqbal2016} applied graphical lasso \citep{friedman2008sparse} to a well-studied set of food intake data from $16\,340$ women and $10\,780$ men, to fit sex-specific concentration graph models. The number of food groups considered is 49, and for both women and men, the resulting network is made up of one major dietary network, called the principal intake network, and by a few disconnected small networks, consisting of similar food groups for women and men
\citep[see][Fig.~1 and 2]{iqbal2016}. The principal intake networks are given in Fig.~\ref{FIG:food.intake.networks} and they comprise 13 food groups for women and 12 for men, with \texttt{fried potatoes} consumption being not present in the latter. The two principal networks also differ with respect to the neighbours of the food group \texttt{legumes}. The estimates of the  partial correlations and of the   networked inflated partial correlations can be found in Table~\ref{TAB:food.intake.corr}. In this section we build on the analysis of  \citet{iqbal2016} in order to provide some examples on how the theory of path weights can be used to  address points of interest and to answer relevant questions.

\begin{figure}
\includegraphics[scale=0.9]{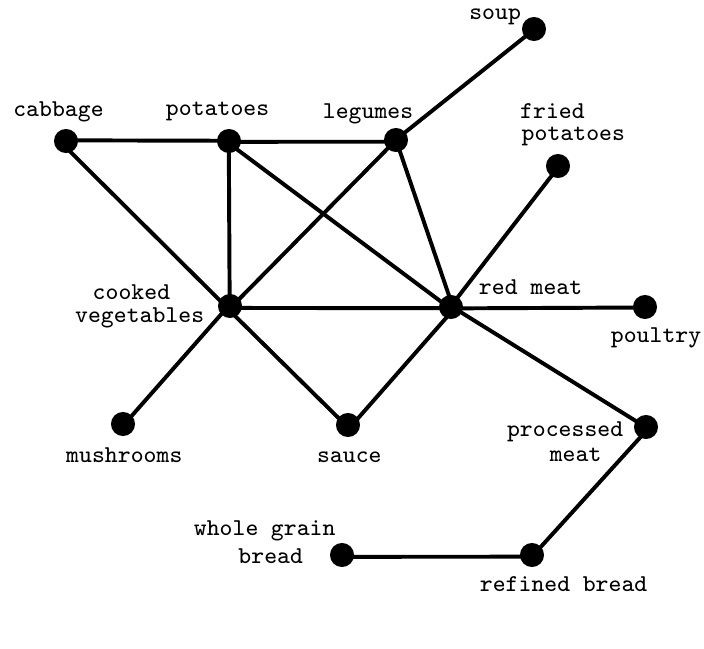}
\hfill
\includegraphics[scale=0.9]{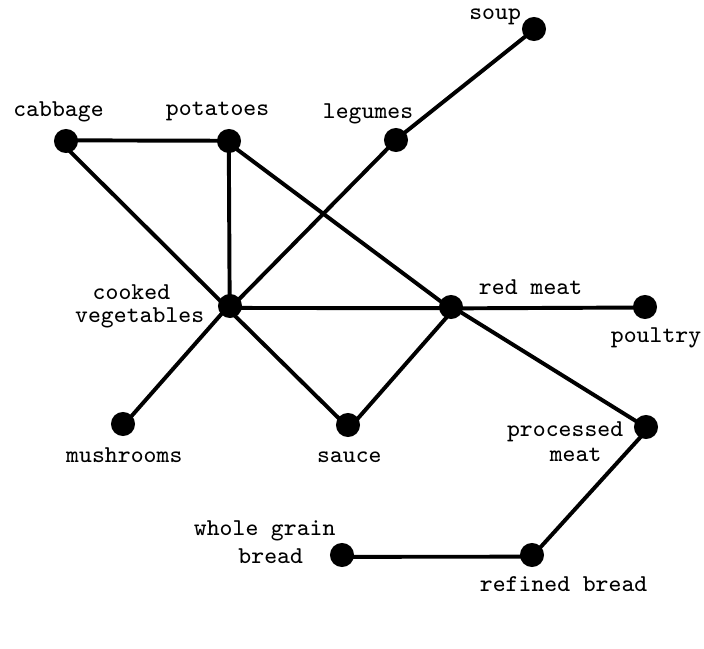}

\hspace*{8.5eM}(a)\hfill (b)\hspace*{8.5eM}

\caption{Principal dietary intake networks from \citet{iqbal2016}: (a) women, (b) men.}\label{FIG:food.intake.networks}
\end{figure}

\begin{table}[b]
\caption{Fitted values of partial correlations (PC) and networked inflated partial correlations (NIPC) for the edges of the intake networks in Fig.~\ref{FIG:food.intake.networks}.}\label{TAB:food.intake.corr}
\begin{center}
{\footnotesize
\begin{tabular}{|ll|r|r|r|r|ll|r|r|r|r|}
\hline
 & &\multicolumn{2}{|c|}{Women} & \multicolumn{2}{c|}{Men} & \multicolumn{2}{||c|}{~} &\multicolumn{2}{|c|}{Women} & \multicolumn{2}{c|}{Men}\\
\cline{3-6} \cline{9-12}
\multicolumn{2}{|c|}{\raisebox{0.5eM}{Edge}} &  \multicolumn{1}{c|}{PC} & \multicolumn{1}{c|}{NIPC} & \multicolumn{1}{c|}{PC} & \multicolumn{1}{c|}{NIPC} &
\multicolumn{2}{||c|}{\raisebox{0.5eM}{Edge}} &  \multicolumn{1}{c|}{PC} & \multicolumn{1}{c|}{NIPC} &  \multicolumn{1}{c|}{PC} & \multicolumn{1}{c|}{NIPC}\\
\hline
cooked veg.   & red meat      &$0.09$ & $0.16$ & $0.11$ & $0.19$   &\multicolumn{1}{||l}{legumes}        &red meat   &$0.07$ &$0.11$ & & \\
cooked veg.   &sauce          &$0.13$ & $0.18$ & $0.15$ & $0.21$  &\multicolumn{1}{||l}{fried pot.} &red meat   &$0.06$ &$0.09$ & &  \\
cooked veg.   &mushrooms      &$0.18$ & $0.23$ & $0.21$ & $0.28$  &\multicolumn{1}{||l}{poultry}        &red meat   &$0.29$ &$0.42$ & $0.28$ & $0.39$ \\
cooked veg.   &cabbage        &$0.22$ & $0.29$ & $0.29$ & $0.39$ &\multicolumn{1}{||l}{sauce}          &red meat    &$0.20$ &$0.29$ & $0.23$ & $0.33$ \\
cooked veg.   &potatoes       &$0.17$ & $0.23$ & $0.13$ & $0.19$   &\multicolumn{1}{||l}{potatoes}       &red meat  &$0.18$ &$0.28$ & $0.21$ & $0.31$ \\
cooked veg.   &legumes        &$0.17$ & $0.23$ & $0.12$ & $0.17$  &\multicolumn{1}{||l}{potatoes}       &cabbage    &$0.14$ &$0.17$ & $0.15$ & $0.19$ \\
whole bread         &refined br.  &-$0.37$&-$0.45$ & -$0.34$ & -$0.40$  &\multicolumn{1}{||l}{potatoes}       &legumes    &$0.10$  &$0.13$ & & \\
proc. meat      &refined br.  &$0.17$ & $0.23$ & $0.20$ & $0.26$  &\multicolumn{1}{||l}{soup}           &legumes    &$0.20$ &$0.23$ & $0.27$ & $0.30$ \\
proc. meat      &red meat       &$0.31$ & $0.46$ & $0.25$ & $0.37$  &\multicolumn{1}{||l}{~} & & & & &\\
\hline
\end{tabular}
}
\end{center}
\end{table}

An advantage of networks, with respect to traditional methods used in dietary pattern analysis, is that they provide the  association structure between food variables, thereby improving the understanding of the complexity of eating behaviours. One key issue that can be addressed using networks is the identification of food groups that play a central role in the eating behaviours. On the basis of degree centrality, i.e., the number of neighbours of vertices, \citet{iqbal2016} identified consumption of \texttt{red meat} and \texttt{cooked vegetables} as central to the dietary intake of both women and men and, in addition, also \texttt{legumes} and \texttt{potatoes} consumption were identified as central, but only for women. Degree centrality is a local measure that does not take into account the global structure of the network. On the other hand, one can consider betweenness-like centrality, which is based on some measure of relative relevance of paths passing trough a given vertex \citep{freeman1977set}. In concentration graph models, path weights constitute a natural way to quantify the relevance of a path, and accordingly the betweenness of a food group represented by a vertex $v$ with respect to vertices $x$ and $y$ can be computed as
\begin{align}\label{EQN:xyv-betweenness}
B_{xy}(v)=\frac{\sum_{\Pt\in \PT_{xy}; v\in\Pt}|\w(\Pt, \infCORR{V})|}{\sum_{\Pt\in \PT_{xy}}|\w(\Pt, \infCORR{V})|}
\end{align}
so that an overall measure of betweenness centrality of the vertex $v$ is given by $B(v)=\sum B_{xy}(v)$, where the sum is taken over all unordered pairs of vertices $x,y\in V$ with $x,y\neq v$. Because $B(v)$ scales with the number of pairs of vertices, the following normalization is usually performed:
\begin{align*}
\tilde{B}(v)=\frac{B(v)-B_{\min}}{B_{\max}-B_{\min}},
\end{align*}
where $B_{\min}=\min_{v\in V}\{B(v)\}$ and $B_{\max}=\max_{v\in V}\{B(v)\}$. In this way, it holds hat $\tilde{B}=1$ for the most central vertex and $\tilde{B}=0$ for the least central. It is important to remark that, although we use inflated correlation weights to calculate betweenness centrality, this quantity would not change if computed with any of the other types of weights considered in this paper, because it follows immediately from Lemma~\ref{THM:phi.almost.scale.invariant} that $B_{xy}(v)$ in (\ref{EQN:xyv-betweenness}) is invariant with respect to scale transformations of $\Sigma$. We also recall that betweenness centrality is more commonly computed by restricting attention to  shortest paths. Here, we follow the alternative approach of considering all paths \citep{borgatti2006graph}, because in this way the interpretation of $B_{xy}(v)$ in (\ref{EQN:xyv-betweenness}) is specially straightforward. Indeed, it can be shown from the network structures and the signs of partial correlations in Table~\ref{TAB:food.intake.corr} that the fitted distributions of interest  belong to the family of signed
totally positive distributions of order two for both women and men. As a consequence, $B_{xy}(v)$ in (\ref{EQN:xyv-betweenness}) is simply the proportion of $\hat{\varrho}^{V}_{xy}$, or equivalently of $\hat{\sigma}_{xy}$ and $\hat{\rho}_{xy}$, due to paths between $x$ and $y$ containing the vertex $v$. Table~\ref{TAB:betweenness} gives the values of betweenness and normalized betweenness for the principal intake networks of women and men. This analysis confirms the central role played by \texttt{red meat} and \texttt{cooked vegetables} consumption, but it also highlights that the consumption of \texttt{processed meat} is relevant with respect to centrality in both networks. Furthermore, \texttt{legumes} and \texttt{potatoes} seem to play a similar role in women and men, despite the higher numbers of neighbours they have in the network for women. Although comparison of alternative centrality measures is beyond the scope of this paper, it is worth mentioning that an analysis of betweenness centrality based on shortest paths gave results similar to those provided by Table~\ref{TAB:betweenness}, thus leading to comparable conclusions.

\begin{table}
\caption{Betweenness centrality, $B$, and normalized betweenness centrality, $\tilde{B}$, for the principal intake networks of women and men; variables are ordered according to the value of $\tilde{B}$ for women.}\label{TAB:betweenness}
\begin{center}
\begin{tabular}{|l|rr|rr|}
\hline
             &\multicolumn{2}{|c|}{Women} & \multicolumn{2}{c|}{Men}\\
\cline{2-5}
\multicolumn{1}{|c|}{\raisebox{0.5eM}{Food group}}       & \multicolumn{1}{c}{$B$}
& \multicolumn{1}{c|}{$\tilde{B}$\rule[0ex]{0ex}{2.5ex}}   & \multicolumn{1}{c}{$B$}  & \multicolumn{1}{c|}{$\tilde{B}$}\\
\hline
       red meat  &   45.63  &   1.00   &  33.72   &   1.00   \\
cooked vegetables&   24.35  &   0.53   &  31.94   &   0.95   \\
 processed meat  &   20.00  &   0.44   &  18.00   &   0.53   \\
        legumes  &   13.53  &   0.30   &  10.00   &   0.30   \\
       potatoes  &   11.72  &   0.26   &   7.44   &   0.22   \\
  refined bread  &   11.00  &   0.24   &  10.00   &   0.30   \\
          sauce  &    3.35  &   0.07   &   4.80   &   0.14   \\
        cabbage  &    1.23  &   0.03   &   2.19   &   0.07   \\
      mushrooms  &    0.00  &   0.00   &   0.00   &   0.00   \\
        poultry  &    0.00  &   0.00   &   0.00   &   0.00   \\
           soup  &    0.00  &   0.00   &   0.00   &   0.00   \\
    whole bread  &    0.00  &   0.00   &   0.00   &   0.00   \\
 fried potatoes  &    0.00  &   0.00   &          &          \\
\hline
\end{tabular}
\end{center}
\end{table}

The role played by \texttt{red meat} consumption in dietary patterns is specially important for the comprehension of eating behaviours and, as remarked by \citet{iqbal2016}, the strong positive partial correlation between \texttt{red meat} and \texttt{processed meat} is an interesting finding with possible implications in further analyses aimed at investigating the effect of meat consumption on health outcomes. This finding is confirmed by the values of networked inflated partial correlations in Table~\ref{TAB:food.intake.corr}, which take into account not only the partial association encoded by an edge, but also the relevance of the edge in terms of connections with the rest of the network. Indeed, for women, \texttt{red meat} and \texttt{processed meat} is the pair of variables with the strongest value of networked inflated partial correlation. We also note that there are three food groups relating to meat, specifically \texttt{red meat}, \texttt{processed meat} and \texttt{poultry}. In both networks these vertices form a path with three vertices and to investigate the relevance of this path within the two networks we computed the inflated correlation weights for all the paths on three vertices and compared them. This is meaningful because, by Corollary~\ref{THM:infcor.upper.lower}, unlike the other types of weights, it is possible to use the inflated correlation weight to compare paths with different endpoints. Interestingly, in both networks, the path $\lp\texttt{processed meat},\; \texttt{red meat},\;\texttt{poultry}\rp$ is the one with the strongest inflated correlation weight among all paths on three vertices, thereby providing evidence of the relevance of meat in food intake patterns.

One of the tasks of dietary pattern analysis is to understand how foods are consumed in relation to each other. From this perspective, one important structural difference between the two networks in Fig.~\ref{FIG:food.intake.networks} concerns the vertex \texttt{legumes}, which presents fewer connections with other vertices in the men's network than in the women's network. This affects, for example, the relationship between \texttt{soup} and \texttt{cooked vegetables} consumption. Indeed, for men this is entirely due to the path $\lp \texttt{soup},\;\texttt{legumes},\;\texttt{cooked vegetables}\rp$, whereas in the network for women there are nine paths between these two variables. Hence, a comparison of the two networks that only involves their edge structure would convey the information that the association structure of these two food groups is much more complex in women than in men. However, if we look at the weights of these 9 paths, we can easily see that in women the weight of the path $\lp \texttt{soup},\;\texttt{legumes},\;\texttt{cooked vegetables}\rp$ accounts for 81.4\% of the strength of the association between \texttt{soup} and \texttt{cooked vegetables}, while the remaining eight paths account for only 18.6\%. Therefore, the additional information provided by the values of path weights allows one to see that the path $\lp \texttt{soup},\;\texttt{legumes},\;\texttt{cooked vegetables}\rp$ is highly relevant also in women. From this observation, it follows that the way in which \texttt{soup} and \texttt{cooked vegetables} are consumed in relation to each other is more similar between men and women than would appear at first sight from a simple visual comparison of the  two networks. More generally, in the analysis of how two foods, $x$ and $y$ say, are consumed in relation to each other, path weights can be used to identify the most relevant paths between $x$ and $y$, that is, the most relevant dietary patterns. Finally, we remark that, also in this case, we do not need to choose which association measures to apply among covariance, correlation and inflated correlation, because all the types of weights considered in this paper would give the same results in this type of analysis.

\section*{Acknowledgments}
We gratefully acknowledge useful discussions with Beatrix Jones. We would like to thank the associate editor and two referees for their valuable comments which have led to improvements in the paper. We acknowledge the support of the Spanish MINECO/FEDER (TIN2015-71079-P), the Catalan AGAUR (SGR17-1020) and the European COST (CA15109). Alberto Roverato was supported by the Air Force Office of Scientific Research under award number FA9550-17-1-0039.
%

\bibliographystyle{chicago}
\bibliography{pw-ref}
\end{document}